\newtheorem{definition}{\bf Definition}[section]
\newtheorem{theorem}[definition]{\bf Theorem}
\newtheorem{proposition}[definition]{\bf Proposition}
\newtheorem{corollary}[definition]{\bf Corollary}
\newtheorem{remark}[definition]{\bf Remark}
\begin{document}
\title{ On a comparison theorem for parabolic equations
	\\  
	      with nonlinear boundary conditions\\[3mm]}
\author{{\large Kosuke Kita}\\[3mm]
	Graduate School of Advanced Science and Engineering, \\ Waseda University, 3-4-1 Okubo Shinjuku-ku, Tokyo, 169-8555, JAPAN\\[5mm]
		{\large Mitsuharu \^{O}tani}\\[3mm]
	Department of Applied Physics, School of Science and Engineering, \\ Waseda University, 3-4-1 Okubo Shinjuku-ku, Tokyo, 169-8555, JAPAN\\[5mm]}
\date{}
%
%
%
\footnotetext[1]{
2010 {\it Mathematics Subject Classification.} Primary: %
35B51; 
Secondary: %
35B40, 
35K51, 
35K57. 
\\
Keywords: comparison theorem, nonlinear boundary conditions, blow up.
}
\footnotetext[2]{
	The first author was partially supported
	Grant-in-Aid for JSPS Fellows \# 20J11425
	and the second author was 
	partly supported by the Grant-in-Aid 
	for Scientific Research, \# 18K03382,
	the Ministry of Education, Culture, Sports, Science and Technology, Japan.}
\footnotetext[3]{e-mail : kou5619@asagi.waseda.jp}
\maketitle
\noindent
{\bf Abstract.}
  In this paper, a new type of comparison theorem is presented for some initial-boundary value problems 
    of second order nonlinear parabolic systems with nonlinear boundary conditions.
      This comparison theorem has an advantage over the classical ones, since this makes it possible 
        to compare two solutions satisfying different types of boundary conditions.  
  Some applications are given in the last section, where
    the existence of blow-up solutions is shown for some nonlinear parabolic equations 
      and systems with nonlinear boundary conditions. 


\section{Introduction}
 Mathematical models for various types of phenomena arising from physics, chemistry, 
	 biology and so on are often described as reaction diffusion equations which give  
	   typical examples of second order nonlinear parabolic equations. 
	 It is widely recognized  that comparison theorems yield very powerful tools for analyzing 
	  the second order parabolic equations, e.g., for constructing super-solutions or 
	   sub-solutions; and for examining the asymptotic behavior of solutions.
	  On the other hand, when one chooses right boundary conditions for the heat equations, 
	    it should be noted that if no artificial control of flux is given on the boundary, 
	      it is natural to consider the nonlinear boundary conditions from a physical point of view (cf. the Stefan-Boltzmann law). 
	 However, most of the existing results on comparison theorems for nonlinear diffusion 
	   equations are concerned with the standard linear boundary conditions such as 
	    Dirichlet or Neumann boundary conditions (see \cite{QS}).
	 	  Furthermore, these comparison theorems are applicable only to problems whose  
            imposed boundary conditions are of the same form.
	 There is a result on comparison theorems dealing with nonlinear boundary conditions by 
       B\'enilan and D\'iaz \cite{Be}, which also compares two solutions satisfying 
         nonlinear boundary conditions of the same form.
 	Our comparison theorem, as is described below, has an advantage that 
       it allows us to compare solutions controlled by two different ( nonlinear ) boundary conditions.

	 The main purpose of this paper is to give a comparison theorem for a rather wide class of 
	   nonlinear systems of reaction diffusion equations with nonlinear boundary conditions, 
	     i.e., the following system of equations for $U=(u^1, u^2, \cdots, u^m)$ given by  
\begin{equation*}
 {\rm (P)} \ 
\left\{
\begin{aligned}
    & \frac{\partial u^k}{\partial t} - \sum_{i,j=1}^{N}\frac{\partial}{\partial x_j} 
       \left( a_{ij}^k(t,x)\frac{\partial u^k}{\partial x_i} \right) + \beta^k(t,x,u^k)  
       - F^k(t,x,U) \ni 0, 
      && \quad (t,x) \in Q_T := (0,T)\times \Omega,
\\
    & -\sum_{i,j=1}^{N}a_{ij}^k(t,x) ~\! \nu_{j}\frac{\partial u^k}{\partial x_i} \in \gamma^k(t,x,u^k), 
      && \quad (t,x) \in \Gamma_T := (0,T)\times \partial \Omega,
\\
    & u^k(0,x) = a^k(x), 
      && \quad x\in\Omega,
\end{aligned}
\right.
\end{equation*}
  where \(\Omega\) is a general domain in \(\mathbb{R}^N\) with smooth boundary \(\partial\Omega\),
     $\nu = \nu(x) = (\nu_1, \cdots, \nu_N)$ is the unit outward vector at $x \in \partial \Omega$, 
        \(u^k: Q_T \to \mathbb{R} \ (k =1,2,\cdots,m) \) are the unknown functions.

As for the coefficients \(a_{ij}^k \ (k = 1,2,\cdots,m)  \), we assume  
\begin{align} 
   \exists \lambda^k \geq 0 \quad \text{such that} \quad  
    & \lambda^k |\xi|^2  \le \sum_{i,j=1}^{N} a_{ij}^k(t,x) ~\! \xi_i \xi_j 
          \qquad \forall \xi \in\mathbb{R}^N, 
              \quad \text{a.e.} \ (t,x) \in Q_T,
   \label{ell} 
\\
   &  a_{i,j}^k \in L^\infty(Q_T), \quad a_{i,j}^k|_{\Gamma_T} \in L^\infty(\Gamma_T).
  \label{cond:a:infy}
\end{align}
  We also assume that 
    \(F^k : Q_T \times \mathbb{R}^m \to 2^{\mathbb{R}^1} \ (k=1,2,\cdots,m) \) are 
      (possibly multi-valued) nonlinear mappings; 
      \(\beta^k(t,x,\cdot)\) and \(\gamma^k(t,x,\cdot)  \ (k=1,2,\cdots,m) \) are maximal monotone graphs on 
        \(\mathbb{R}^1\times\mathbb{R}^1\) \ for $a.e.\ (t,x) $. More precisely, 
          there exist lower semi-continuous convex functions \(j^k(t,x,r) : \Gamma_T \times \mathbb{R}\to (-\infty, +\infty] \) 
           and \(\eta^k(t,x,r) : Q_T \times \mathbb{R}\to (-\infty, +\infty] \) such that   
             \(\gamma^k = \partial j^k\) and \(\beta^k = \partial \eta^k\), respectively.
    Here \(\partial j^k\) and \(\partial \eta^k\) denote subdifferentials of \(j^k\) and \(\eta^k\) 
     with respect to $r \in \mathbb{R}$, respectively.	   	 

  The problem with this type of boundary conditions appears in models describing diffusion phenomena 
    taking into consideration some nonlinear radiation law on the boundary (see Br\'ezis\cite{B1} 
      and Barbu \cite{Ba1}) and the solvability for (P) is examined in detail under various settings 
       (see \cite{B1,Ba1,O1}).

   In this paper, we work with solutions of (P) in the following sense.
\begin{definition}\label{def:1}
  A function $U = (u^1,u^2,\cdots,u^m) :Q_T \to \mathbb{R}^m$ is called 
   a super-solution (resp. sub-solution)   
    of {\rm (P)} on $[0,T]$ if and only if for all $k \in \{ 1,2,\cdots,m \}$,	
\begin{equation}\label{reg:u:k}
     u^k \in C([0,T];L^2(\Omega)) \cap L^{\infty}([0,T];L^{\infty}(\Omega))  
          \cap W^{1,2}_{loc}((0,T];L^2(\Omega))   
           \cap L^2_{loc}((0,T]; H^2(\Omega)), 
\end{equation}
   and there exist sections $ f^k, b^k, g^k \in  L^2_{loc}((0,T];L^2(\Omega))$ of 
    $ F^k(t,x,U(t,x))$, $\beta^k(t,x,u^k(t,x))$, 
\\ 
     $\gamma^k(t,x,u^k(t,x))$ satisfying 
     {\rm (P)}, i.e.,     
 \begin{equation*}
   \begin{cases}
       & \frac{\partial u^k}{\partial t} - \sum_{i,j=1}^{N}\frac{\partial}{\partial x_j} 
          \left( a_{ij}^k(t,x)\frac{\partial u^k}{\partial x_i} \right) 
             + b^k(t,x) - f^k(t,x) \geq 0 \ (\text{resp.} \ \leq 0), 
\\[2mm]
      &  \quad 
       f^k(t,x,U) \in F^k(t,x,U(t,x)), \quad  b^k(t,x) \in \beta^k(t,x,u^k(t,x)), 
        \quad  \text{a.e.} \ (t,x) \in Q_T ,
 \\[3mm]
       &  -\sum_{i,j=1}^{N}a_{ij}^k(t,x) ~\! \nu_{j}\frac{\partial u^k}{\partial x_i} 
                  \leq g^k(t,x) \ (\text{resp.} \ \geq ), 
 \\[2mm]
      &  \quad g^k(t,x) \in \gamma^k(t,x,u^k(t,x))  \quad \text{a.e.} \ (t,x) \in \Gamma_T, 
  \\[2mm] 
      &   u^k(0,x) = a^k(x), \quad \text{a.e.} \ x \in \Omega.
 \end{cases}
 \end{equation*}
   If $U$ is a super- and sub-solution of {\rm (P)} on $[0,T]$ 
     with the same sections $f^k, b^k, g^k$, 
       then $U$ is called a solution of {\rm (P)} on $[0,T]$.

   We also define the maximal existence time $T_m= T_m(U)$ of a solution $U$ by 
\begin{equation*}
   T_m(U) := \sup \{ ~\! T>0 ~\! ; ~\! U \ \text{is extended to $[0,T]$ as a solution of 
   	                                               {\rm (P)} in the sense above.} \}
\end{equation*}   
\end{definition}
\begin{remark}
	When the existence of solution is concerned, the assumption 
	  \(D(\beta^k)\cap D(\gamma^k) \neq \emptyset\) is usually required 
	   for each $k$ (see \cite{B1,Ba1}). However we do not apparently need this assumption 
	     to derive our comparison theorem, since the existence of solutions 
	      satisfying \eqref{reg:u:k} is always assumed in our setting.
\end{remark}


\section{Main theorem and its proof}
 
  In this section we state our comparison theorem for (P) and give a proof of it.
   The idea of proof is standard and elementary, however, this type comparison theorem 
     can cover various types of nonlinear parabolic equations including those  
       with classical linear boundary conditions.
   The applicability of this comparison theorem will be exemplified in the next section.

   Consider the following two systems of equations:

\begin{equation*}
 {\rm (P)}_1 \ 
\left\{
\begin{aligned}
     & \frac{\partial u^k}{\partial t} 
       - \sum_{i,j=1}^{N}\frac{\partial}{\partial x_j} \left( a_{ij}^k(t,x)
          \frac{\partial u^k}{\partial x_i} \right) 
            + \beta_1^k(t,x,u^k) - F_1^k(t,x,U) \ni 0, 
         &&\quad t>0,~x\in\Omega,
\\
     & -\sum_{i,j=1}^{N}a_{ij}^k(t,x)\nu_{j}\frac{\partial u^k}{\partial x_i} 
         \in \gamma_1^k(t,x,u^k), 
         &&\quad t>0,~x\in\partial\Omega,
\\
     & u^k(0,x) = a^k_1(x), 
         &&\quad x\in\Omega,
\end{aligned}
\right.
\end{equation*}
and
\begin{equation*}
  {\rm (P)}_2
\left\{
\begin{aligned}
     & \frac{\partial u^k}{\partial t} 
      - \sum_{i,j=1}^{N}\frac{\partial}{\partial x_j} \left( a_{ij}^k(t,x)
         \frac{\partial u^k}{\partial x_i} \right) 
           + \beta_2^k(t,x,u^k) -  F_2^k(t,x,U) \ni 0, 
         &&\quad t>0,~x\in\Omega,
\\
     & -\sum_{i,j=1}^{N}a_{ij}^k(x)\nu_{j}\frac{\partial u_k}{\partial x_i} 
         \in \gamma_2^k(t,x,u^k), 
         &&\quad t>0,~x\in\partial\Omega,
\\
     & u^k(0,x) = a^k_2(x), 
         &&\quad x\in\Omega,
\end{aligned}
\right.
\end{equation*}
  where for every $k \in \{1,2, \cdots,m\}$,
   \(\beta_i^k\), \(\gamma_i^k\) and \(F_i^k\) in (P)\(_i\) satisfy 
    the same conditions as those for \(\beta^k\), \(\gamma^k\) and \(F^k\) in (P). 
     Then our main theorem is stated as follows.

\begin{theorem}\label{com}
	Let \(U_1 = (u_1^1, u_1^2, \cdots, u_1^m)\) be a sub-solution of 
	 {\upshape(P)\(_1\)} on $[0,T]$ and \(U_2 = ( u_2^1, u_2^2, \cdots, u_2^m)\) 
	   be a super-solution of  {\upshape(P)\(_2\)}  on $[0,T]$, 
	    and let the following assumptions {\rm (A1)-(A4)} be satisfied.
	\begin{itemize}
		\item[{\rm (A1)}] \ \(a^k_1(x) \le a^k_2(x) \)\quad a.e.  $ x \in \Omega$ \ 
		                             for all $k \in \{ 1,2,\cdots,m \}$.
		\item[{\rm (A2)}] \ For each $k \in \{ 1,2,\cdots,m \}$, one of the following 
		                     {\rm (i)-(ii)} holds true. 
\\[3mm]
\hspace*{-5mm} 
		  {\rm (i)} \ $\beta_1^k(t,x,\cdot) = \beta_2^k(t,x,\cdot) = \beta^k(t,x,\cdot) 
		                      \quad a.e. \ (t,x) \in Q_T$.
  \vspace{4mm}\\
\hspace*{-5mm}
		 {\rm (ii) }  
		    $
		      \ \sup \ \{ ~\! b_2^k ~\! ; ~\! b_2^k \in \beta_2^k(t,x,r_2) ~\! \} 
		         \leq \inf \  \{ ~\!  b_1^k ~\! ; ~\! b_1^k \in \beta_1^k(t,x,r_1) ~\!\} 
		 \\[1mm] 
		      \qquad \quad \forall r_1 \in D(\beta_1^k(t,x,\cdot)), \  
		              \forall r_2 \in D(\beta_2^k(t,x,\cdot)) 
		               \quad \text{with} \ r_1> r_2 
		                \quad a.e. \ (t,x) \in Q_T.
		   $       
 \vspace{2mm}        
		\item[{\rm (A3)}] \ For each $k \in \{ 1,2,\cdots,m \}$, one of the following 
		{\rm (i)-(iii)} holds true. 
 \\[3mm] 
 \hspace*{-5mm}
  	    {\rm (i)} \ $\gamma_1^k(t,x,\cdot)  = \gamma_2^k(t,x,\cdot) 
  	                                         = \gamma^k(t,x,\cdot) $ 
  	                         \quad $ a.e. \ (t,x) \in \Gamma_T$.             
  \vspace{4mm}\\ 
 \hspace*{-5mm}
  	    {\rm (ii)}  
		  $
		    \ \sup \ \{ ~\! g_2^k ~\! ; ~\! g_2^k \in \gamma_2^k(t,x,r_2) ~\! \} 
		       \leq \inf \  \{ ~\!  g_1^k ~\! ; ~\! g_1^k \in \gamma_1^k(t,x,r_1) ~\!\} 
\\[1mm] 
		        \qquad \quad \forall r_1 \in D(\gamma_1^k(t,x,\cdot)), \  
		         \forall r_2 \in D(\gamma_2^k(t,x,\cdot)) 
		          \quad \text{with} \ r_1> r_2 
		           \quad a.e. \ (t,x) \in \Gamma_T. 
		$
 \vspace{3mm}\\
\hspace*{-5mm}
      {\rm (iii) } \  $r_1^k \leq r_2^k$ \quad 
                   $\forall r_1^k \in D(\gamma_1^k(t,x,\cdot))$, \ 
                    $\forall r_2^k \in D(\gamma_2^k(t,x,\cdot))$  
                      \quad $ a.e. \ (t,x) \in \Gamma_T$.  
    \vspace{2mm}
		\item[{\rm (A4)}] \ For each $k \in \{ 1,2,\cdots,m \}$, the following 
		{\rm (i)} and {\rm (ii)} hold true. 
		\\[3mm]
		\hspace*{-5mm} 
		{\rm (i)} \ 
		  $ - \infty < \sup \ \{~\! z ; z \in F_1^k(t,x,U) ~\! \} 
		         \le \inf \ \{~\! z ; z \in F_2^k(t,x,U) ~\! \} < + \infty   
		         \quad a.e. \ (t,x,U) \in Q_T \times \mathbb{R}^m $.  
\\[2mm]
		\hspace*{-5mm} 
{\rm (ii)} \ 
		    $ F_1^k(t,x,\cdot) $ or $F_2^k(t,x,\cdot)$ is single-valued and 
		      satisfies the following structure condition  
\\[1mm]
   \hspace*{3mm} {\rm (SC)} with $F^k$ replaced by $F_1^k$ or $F_2^k$:
\\[2mm]
	\hspace*{-5mm} 
		   {\rm (SC)} \ $F^k(t,x,U)$ is differentiable for almost all $U \in \mathbb{R}^m$ and 
		          satisfies
 \vspace{2mm}
	\begin{equation}\label{cond:F:sc1}
	  \frac{\partial}{\partial u_j} F^k(t,x,U) \geq 0 \quad \text{for all} \ j \neq k \quad 
	                                            \text{for} \ a.e. \ (t,x,U) \in Q_T \times \mathbb{R}^m
	\end{equation}
   and for any $M>0$ there exists $L_M>0$ such that 
     \begin{equation}\label{cond:F:sc2}
      \sup ~\! \left\{~\! \Bigl|  \frac{\partial}{\partial u_j} F^k(t,x,U)  \Bigr| ~\! ; ~\!   
                    1\leq j \leq m, \ \   
                      (t,x,U)  \in Q_T \times \{ ~\! U ~\! ; ~\! |U|_{\mathbb{R}^m} \leq M ~\!\} 
                        ~\! \right\}  \leq L_M.
     \end{equation}	
	\end{itemize}
	Then, we have 
	\begin{equation}
	\label{A2.1}
	u^k_1(t,x) \le u^k_2(t,x)\quad\quad \forall k \in \{1,2,\cdots,m\}, \quad 
	                                     \forall t\in[0,T],\ \ a.e.\ x\in\Omega.
	\end{equation}

\end{theorem}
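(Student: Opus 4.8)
The plan is to test the difference of the two families of inequalities against the positive part of the difference of corresponding components and then close the argument with Gronwall's inequality. Set $w^k := u_1^k - u_2^k$ and write $(w^k)_+ := \max\{w^k,0\}$. Since $U_1$ is a sub-solution of (P)$_1$ and $U_2$ a super-solution of (P)$_2$ and since both problems share the same principal part $a_{ij}^k$, I would subtract the interior differential inequalities, multiply by $(w^k)_+\ge0$, integrate over $\Omega$, and integrate the diffusion term by parts to obtain, for a.e.\ $t$,
\begin{equation*}
\frac{1}{2}\frac{d}{dt}\int_\Omega \bigl((w^k)_+\bigr)^2\,dx
+ \int_\Omega \sum_{i,j} a_{ij}^k\, \partial_{x_i}(w^k)_+\,\partial_{x_j}(w^k)_+\,dx
- \int_{\partial\Omega}\Bigl(\sum_{i,j}a_{ij}^k\nu_j\partial_{x_i}w^k\Bigr)(w^k)_+\,dS
\le \int_\Omega (f_1^k-f_2^k)(w^k)_+\,dx - \int_\Omega (b_1^k-b_2^k)(w^k)_+\,dx,
\end{equation*}
using $\partial_{x_i}(w^k)_+ = \partial_{x_i}w^k\,\mathbf{1}_{\{w^k>0\}}$ a.e. The regularity \eqref{reg:u:k} makes every term meaningful for a.e.\ $t\in(0,T]$: $u^k(t)\in H^2(\Omega)$ gives the trace of the co-normal derivative, $w^k(t)\in H^1(\Omega)$ gives $(w^k)_+\in H^1(\Omega)$, and $u^k\in W^{1,2}_{loc}((0,T];L^2(\Omega))$ justifies the chain rule in the time derivative. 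Since the estimate is only local near $t=0$, I would integrate in time over $[\varepsilon,t]$ and afterwards let $\varepsilon\to0^+$, relying on $u^k\in C([0,T];L^2(\Omega))$.

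Next I would show that the two terms on the left coming from diffusion and the boundary are nonnegative, while the $\beta$-term on the right is nonpositive, so that only the reaction term survives. Ellipticity \eqref{ell} gives the diffusion integral $\ge\lambda^k\int_\Omega|\nabla(w^k)_+|^2\,dx\ge0$. For the boundary term I would argue pointwise on $\{w^k>0\}\cap\partial\Omega$, i.e.\ where $u_1^k>u_2^k$: the sub/super boundary inequalities give $\sum_{i,j}a_{ij}^k\nu_j\partial_{x_i}w^k\le g_2^k-g_1^k$, and each alternative of (A3) forces $g_1^k\ge g_2^k$ there — under (i) by monotonicity of the common $\gamma^k$, under (ii) directly from the sup/inf inequality with $r_1=u_1^k>u_2^k=r_2$, and under (iii) the domains are ordered so that $u_1^k\le u_2^k$ on $\partial\Omega$ (both lie in the respective domains of $\gamma_i^k$), whence the trace of $(w^k)_+$ vanishes. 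In every case $\int_{\partial\Omega}(\sum a_{ij}^k\nu_j\partial_{x_i}w^k)(w^k)_+\,dS\le0$, so its negative stays on the left as a nonnegative quantity. The same dichotomy applied to (A2) yields $b_1^k\ge b_2^k$ on $\{w^k>0\}$, so $-\int_\Omega(b_1^k-b_2^k)(w^k)_+\,dx\le0$.

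The hard part is the reaction term $\int_\Omega(f_1^k-f_2^k)(w^k)_+\,dx$, because $F^k$ couples all components of $U$. I would use (A4)(ii): say $F_2^k$ is single-valued and satisfies (SC) (the case of $F_1^k$ is symmetric), and split
\begin{equation*}
f_1^k-f_2^k=\bigl(f_1^k-F_2^k(t,x,U_1)\bigr)+\bigl(F_2^k(t,x,U_1)-F_2^k(t,x,U_2)\bigr).
\end{equation*}
By (A4)(i) the first bracket is $\le0$, contributing $\le0$ after multiplication by $(w^k)_+$. For the second bracket I would write the line-integral representation $\sum_{l}\int_0^1\partial_{u_l}F_2^k(t,x,U_2+\theta(U_1-U_2))\,d\theta\,(u_1^l-u_2^l)$, valid by the Lipschitz bound \eqref{cond:F:sc2}; the $L^\infty$-bound in \eqref{reg:u:k} furnishes a fixed $M$ with $|U_i|\le M$, so $L_M$ is a genuine constant. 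On $\{w^k>0\}$ the diagonal term equals $(\int_0^1\partial_{u_k}F_2^k\,d\theta)\,((w^k)_+)^2\le L_M((w^k)_+)^2$, while for $l\ne k$ the sign condition \eqref{cond:F:sc1} gives $\partial_{u_l}F_2^k\ge0$ together with $u_1^l-u_2^l\le(w^l)_+$, so the off-diagonal contribution is $\le L_M(w^l)_+(w^k)_+$. This cooperativeness is exactly what keeps the comparison principle alive for the coupled system. Altogether $\int_\Omega(f_1^k-f_2^k)(w^k)_+\,dx\le L_M\sum_{l=1}^m\int_\Omega(w^l)_+(w^k)_+\,dx$.

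Finally I would assemble the pieces and invoke Gronwall. Discarding the nonnegative diffusion and boundary terms on the left and the nonpositive $\beta$-term on the right leaves $\frac{d}{dt}\|(w^k)_+\|_{L^2(\Omega)}^2\le 2L_M\sum_{l}\int_\Omega(w^l)_+(w^k)_+\,dx$. Summing over $k$ and applying Cauchy--Schwarz, the quantity $\Phi(t):=\sum_{k}\|(w^k)_+(t)\|_{L^2(\Omega)}^2$ satisfies $\Phi'(t)\le 2mL_M\,\Phi(t)$ for a.e.\ $t\in(0,T]$. Assumption (A1) gives $w^k(0)=a_1^k-a_2^k\le0$, hence $\Phi(0)=0$, and the continuity $u^k\in C([0,T];L^2(\Omega))$ lets me send $\varepsilon\to0^+$ so that $\Phi$ starts from $0$. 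Gronwall's inequality then forces $\Phi\equiv0$ on $[0,T]$, i.e.\ $(w^k)_+\equiv0$ for every $k$, which is precisely \eqref{A2.1}.
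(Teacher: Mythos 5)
Your proposal is correct and follows essentially the same route as the paper: test the subtracted inequalities against $(w^k)_+$, dispose of the boundary and $\beta$ terms by the same case analysis of (A3) and (A2), split the reaction term via (A4)(i) and the single-valued $F_i^k$ satisfying (SC), use the cooperativity condition \eqref{cond:F:sc1} to control the off-diagonal couplings, and close with Gronwall. The only cosmetic differences are that you use the line-integral form of the mean value theorem where the paper uses a pointwise intermediate value $\theta$, and you detail the $F_2^k$ case where the paper details the $F_1^k$ case; both are equivalent.
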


\begin{proof}
	  Let $f_i^k, \ b_i^k, \ g_i^k$ be the sections of 
	    $ F_i^k(U_i), \ \beta^k(u^k_i), \ \gamma^k(u^k_i)$ 
	     appearing in (P)$_i$, then \(w^k := u^k_1 - u^k_2\) satisfies 
	\begin{equation}
	 \label{A22}
	  \left\{
	   \begin{aligned}
	     & \partial_t w^k - \sum_{i,j=1}^{N}\frac{\partial}{\partial x_j} 
	        \left( a_{ij}^k(t,x)\frac{\partial w^k}{\partial x_i} \right) 
	          + b^k_1 - b^k_2 \leq f_1^k(U_1) - f_2^k(U_2), 
	       && \quad (t,x) \in Q_T,
\\
	     & -\sum_{i,j=1}^{N}a_{ij}^k(t,x)\nu_{j}\frac{\partial w^k}{\partial x_i} 
	         \ge g^k_1 - g^k_2, 
	       &&\quad (t,x) \in Q_T,
\\
	     & w^k(0,x) = a^k_1(x) - a^k_2(x), 
	       && \quad x \in \Omega.
	\end{aligned}
	\right.
	\end{equation}
	Multiplying \eqref{A22} by \( (w^k)^+ := \max \ (w^k,0) \), we have 
	\begin{align*}
	 \int_{\Omega} \partial_t w^k ~\! (w^k)^+ dx 
	  - \int_{\Omega}\sum_{i,j=1}^{N}\frac{\partial}{\partial x_j} 
	     \left( a_{ij}^k(t,x)\frac{\partial w^k}{\partial x_i} \right) (w^k)^+ dx 
	     & + \int_{\Omega} (b_1^k - b_2^k) (w^k)^+ dx 
\\
	       & \quad \leq \int_{\Omega} ( f_1^k(U_1) - f_2^k(U_2) ) (w^k)^+ dx.
	\end{align*}
	Here we get 
	\begin{equation*}
	\int_{\Omega}\partial_t w^k ~\! (w^k)^+ dx 
	 = \int_{\{w^k\ge 0\}}\partial_t w^k ~\! w^k dx 
	   =\frac{1}{2}\frac{d}{dt}\int_{\{w^k\ge0\}} \!\!\! |w^k|^2 dx 
	     =\frac{1}{2}\frac{d}{dt}\int_{\Omega} |(w^k)^+|^2 dx,
	\end{equation*}
	and by \eqref{ell}
	\begin{align*}
	 & -\int_{\Omega}\sum_{i,j=1}^{N}\frac{\partial}{\partial x_j} 
	    \left( a_{ij}^k(t,x)\frac{\partial w^k}{\partial x_i} \right) (w^k)^+ dx 
\\	     
	  & \qquad \qquad = \int_{\Omega} \sum_{i,j=1}^{N} a_{ij}^k(t,x)\frac{\partial w^k}{\partial x_i}  
	       \frac{\partial (w^k)^+}{\partial x_j} dx 
	        - \int_{\partial\Omega} \sum_{i,j=1}^{N}a_{ij}^k(t,x)\nu_{j}
	           \frac{\partial w^k}{\partial x_i} (w^k)^+ d\sigma 
\\
	  & \qquad \qquad  \ge\int_{\{w^k\ge 0\}} \sum_{i,j=1}^{N} a_{ij}^k(t,x)\frac{\partial w^k}{\partial x_i}
	       \frac{\partial w^k}{\partial x_j} dx 
	        + \int_{\partial\Omega} ( g_1^k - g_2^k) (w^k)^+ d\sigma
\\
	  & \qquad \qquad   = \int_{\Omega} \sum_{i,j=1}^{N} a_{ij}^k(t,x)\frac{\partial (w^k)^+}{\partial x_i}
	       \frac{\partial (w^k)^+}{\partial x_j} dx 
	        + \int_{\partial\Omega} ( g_1^k - g_2^k ) (w^k)^+ d\sigma 
\\
	  & \qquad \qquad \ge \lambda^k \int_{\Omega} \sum_{j=1}^{N} \left| \frac{\partial (w^k)^+}{\partial x_j} \right|^2 dx + \int_{\partial\Omega} ( g_1^k - g_2^k ) (w^k)^+ d\sigma.
	\end{align*}
  Hence we have
\begin{equation}\label{A23}
   \begin{split}
	  \frac{1}{2}\frac{d}{dt}\|(w^k)^+(t)\|_{L^2}^2 
	     + \!\int_{\partial\Omega}\!\! (g_1^k - g_2^k) (w^k)^+ d\sigma 
	      & + \!\int_{\Omega} \!\!(b_1^k - b_2^k) (w^k)^+ dx 
\\
	       & \qquad \leq  \! \int_{\Omega} \!\! ( f_1^k(U_1)-f_2^k(U_2)) (w^k)^+ dx. 
   \end{split}
\end{equation}
  Here we are going to show that 
 \begin{equation}\label{est:boundary}
     I_{\partial \Omega} := \int_{\partial\Omega} ( g_1^k - g_2^k ) ~\! (w^k)^+ d\sigma 
                            = \int_{\{u_1^k > u_2^k\}} ( g_1^k - g_2^k ) ~\! (u_1^k - u_2^k) d\sigma
                               \geq 0.
 \end{equation}
  In fact, if (i) of (A3) is satisfied, then \eqref{est:boundary} is derived from the monotonicity 
   of $\gamma^k$, and (iii) of (A3) implies $(w^k)^+|_{\partial \Omega} = 0$, which leads to
     $I_{\partial \Omega} = 0$. 
  As for the case where (ii) of (A3) is satisfied,  
    $u_1^k > u_2^k$ and $ g_1^k \in \gamma_1^k(u_1^k), \ g_2^k \in \gamma_2^k(u_2^k)$ 
     imply that 
	\begin{equation*}
	  ( g_1^k - g_2^k ) ~\! ( u_1^k - u_2^k) \geq 0,  
	\end{equation*}
   whence follows $ I_{\partial \Omega}\geq 0$.

	In the same way as above, from (A3)
	     we derive 
	\begin{equation}\label{est:positive:gamma}
	   \int_{\Omega} ( b_1^k - b_2^k ) ~\! (w^k)^+ dx  
 	       \ge 0.
	\end{equation}
  Here we consider the case where $F_1^k$ is singleton and satisfies (SC) 
     with $F^k$ replaced by $F_1^k$. 
   
	   Then by (i) of (A4) we obtain
	\begin{align}
	  \int_{\Omega}\! ( f_1^k(U_1)-f_2^k(U_2) ) (w^k)^+ dx 
	    & = \int_{\Omega}\! ( F_1^k(U_1)-f_2^k(U_2) ) (w^k)^+ dx  \notag
\\[2mm] 
	    & = \int_{\Omega}\! ( F_1^k(U_1) - F_1^k(U_2) ) (w^k)^+ dx 
	         + \!\int_{\Omega}\! ( F_1^k(U_2) - f_2^k(U_2) ) (w^k)^+ dx  \notag
\\[2mm]
	    & \le \int_{\Omega} \!( F_1^k(U_1)- F_1^k(U_2) ) (w^k)^+ dx. 
	       \label{est:df}
	\end{align}
   Furthermore by virtue of (SC), there exists some $ \theta \in (0,1)$ such that 
\begin{align*}
  I_F^k := \!\int_{\Omega}\! ( F_1^k(U_1)- F_1^k(U_2) ) (w^k)^+ dx 
       & = \int_{\Omega} \sum_{j=1}^{m} \frac{\partial}{\partial u_j} 
                  F_1^k(U_2 + \theta (U_1-U_2)) ~\! w^j ~\!  (w^k)^+ dx 
\\[2mm]
       & = \!\int_{\Omega} \sum_{j=1}^{m} \frac{\partial}{\partial u_j} 
                  F_1^k(U_2 + \theta (U_1-U_2)) ( (w^j)^+ \!- (w^j)^-)  (w^k)^+ dx 
   \\[2mm]
       & \leq  \int_{\Omega} \sum_{j=1}^{m} \frac{\partial}{\partial u_j} 
                 F_1^k(U_2 + \theta (U_1-U_2)) ~\! (w^j)^+  ~\! (w^k)^+ dx,  
\end{align*}
 where we used the fact that $w = w^+ - w^-, \ w^- := \max \ ( - w, 0) \geq 0$ 
    and $ \frac{\partial}{\partial u_j}F_1^k(U_2 + \theta (U_1-U_2)) ~\! (w^j)^- (w^k)^+
        \geq 0$ for $j \neq k$ and $(w^j)^- (w^k)^+ =0$ for $ j=k$. 
        
  Hence since $ U_i \in L^\infty(0,T; L^\infty(\Omega))$ implies that there exists
   $M>0$ such that 
\begin{equation*}
  \displaystyle\max_{i=1,2} \ \sup_{t \in (0,T)} |U_i(t)|_{\mathbb{R}^m}  \leq M, 
\end{equation*}
    we obtain by \eqref{cond:F:sc2}
  \begin{equation}\label{est:F:above}
   I_F^k \leq  L_M ~\! \|(w^k)^+\|_{L^2} ~\!  \sum_{j=1}^{m} \| (w^j)^+ \|_{L^2}.
  \end{equation}   
    Thus in view of \eqref{A23}, \eqref{est:boundary}, \eqref{est:positive:gamma} and 
     \eqref{est:F:above}, we finally get
	\begin{equation*}
	 \frac{1}{2}\frac{d}{dt} \sum_{k=1}^{m}\|(w^k)^+(t)\|_{L^2}^2 
	    \le L_M \Bigl( \sum_{k=1}^{m} \|(w^k)^+(t)\|_{L^2} \Bigr)^2 
	      \le L_M ~\! m ~\! \sum_{k=1}^{m}\|(w^k)^+(t)\|_{L^2}^2
	   \quad \forall t \in (0,T).
	\end{equation*}
	 Then integrating this over $(s,t)$ with $0<s<t \leq T$, we obtain by Gronwall's inequality
	\begin{equation*}
	 \sum_{k=1}^{m} \|(w^k)^+(t)\|_{L^2}^2  
	    \le \sum_{k=1}^{m} \|(w^k)^+(s)\|_{L^2}^2 ~\! e^{2 m L_M (t-s)}
                                           \quad 0<s\le t\le T.
	\end{equation*}
	Since \(w^k \in C([0,T];L^2(\Omega))\), letting \(s\to 0\), we obtain by (A1)  
	\begin{equation*}
	     \sum_{k=1}^{m} \|(w^k)^+(t)\|_{L^2}^2
	        \le  \sum_{k=1}^{m} \|(a_1^k - a_2^k)^+\|_{L^2}^2  ~\! e^{2 m L_M T} = 0 
	     \quad\quad\quad \forall t \in [0,T],
	\end{equation*}
	whence follows \eqref{A2.1}. 
  
  As for the case where $F_2^k$ is singleton and satisfies (SC) with $F^k$ replaced by 
    $F_2^k$, instead of \eqref{est:df} we can get 
\begin{equation*}
  \int_{\Omega}\! ( f_1^k(U_1)-f_2^k(U_2) ) (w^k)^+ dx 
        \le \int_{\Omega} \!( F_2^k(U_1)- F_2^k(U_2) ) (w^k)^+ dx. 
\end{equation*} 
  Then we can repeat the same argument as above with $F_1^k$ replaced by 
   $F_2^k$.
\end{proof}

\begin{remark}\label{R22}
 {\rm (1)} \ If $f_1^k(U_1) \leq f_2^k(U_2)$ is known a priori, we need not assume {\rm (A4)} 
          for $F_1^k$ and $F_2^k$ in Theorem \ref{com}.
\\[2mm]
 {\rm (2)} \ If $g_1^k(u_1^k) \leq g_2^k(u_2^k)$ is known a priori, we need not assume {\rm (A3)} 
         for $\gamma_1^k$ and $\gamma_2^k$ in Theorem \ref{com}.
\\[2mm]
{\rm (3)} \ If $m=1$ in Theorem \ref{com}, then assumption \eqref{cond:F:sc1} is not needed.
\\[2mm]
{\rm (4)} \ When we discuss the existence of solutions for {\rm (P)$_i$} ($i=1,2$), 
 we need to assume that $\beta_i^k$ and $\gamma_i^k$ are maximal monotone 
   graphs. In Theorem \ref{com}, however, we need only the monotonicity of 
    $\beta_i^k$ and $\gamma_i^k$, since the existence of solutions is always assumed 
       in our setting.
\\[2mm]
{\rm (5)} \ The following condition gives a sufficient condition for {\rm (ii)} of {\rm (A3)}.
\\[3mm]
 {\rm (ii)'}  
$
   \begin{cases}		    
    \  D(\gamma_1^k(t,x,\cdot))\subset D(\gamma_2^k(t,x,\cdot)) 
     \quad a.e. \ (t,x) \in \Gamma_T, \quad \text{and} 
\\[2mm]
    \ \inf \  \{ ~\!  g_1^k ~\! ; ~\! g_1^k \in \gamma_1^k(t,x,r) ~\!\} 
       \geq \sup \ \{ ~\! g_2^k ~\! ; ~\! g_2^k \in \gamma_2^k(t,x,r) ~\! \} 
        \quad \forall r \in D(\gamma_1^k(t,x,\cdot)),
    \end{cases}
$
\\[2mm]
  and the same assertion for {\rm (ii)} of {\rm (A2)} as above holds true.
\end{remark}


\section{Applications}
{
 In this section we give a couple examples of the application of 
  our comparison theorem to some nonlinear problems.
    Especially, in {\S} 3.1, we give a simple proof of the existence of 
      blowing-up solutions for nonlinear diffusion equations 
        with nonlinear boundary conditions.  
   
   We also discuss in {\S} 3.2 the finite time blow up of solutions for 
     a reaction diffusion system arising from a nuclear model 
       with nonlinear boundary conditions, which consists of two equations 
         possessing a nonlinear coupling term between two real-valued 
           unknown functions.
}

\subsection{Nonlinear heat equations with nonlinear boundary conditions}
{
 Consider the following nonlinear heat equations with nonlinear boundary conditions:
\begin{equation}\label{A31}
{\rm (P)}^\gamma_{{\rm F}} \ 
\left\{
  \begin{aligned}
    & \ \partial_t u -\Delta u - F(u) \ni 0, 
       && \quad\quad\quad t>0,~x\in\Omega,
\\
    & \ -\partial_\nu u \in \gamma(u), 
       && \quad\quad\quad t>0, ~ x\in\partial\Omega,
\\
    & \ u(0,x) = u_0(x) \ge 0,
       && \quad\quad\quad x\in\Omega.
\end{aligned}
\right.
\end{equation}
   Here \(\Omega\) is a bounded domain in \(\mathbb{R}^N\) 
    with smooth boundary \(\partial\Omega\) and $\partial_{\nu}$ denotes 
      the outward normal derivative, i.e., $\partial_{\nu} u = \nabla u\cdot \nu$.
       We further impose the following assumptions 
         on $F$ and $\gamma$.
\begin{itemize}
	\item[{\rm (F)}] \ $F : \mathbb{R}^1 \to 2^{\mathbb{R}^1}$ is 
	   a (possibly multi-valued) operator satisfying the following (i) and (ii).
 \begin{align}
  {\rm (i)} & \  0 \in F(0), \quad \inf \ \{ ~\! z ~\!; ~\! z \in F(u) ~\! \} 
                        \geq |u|^{p-2} u^+ \quad \forall u \in \mathbb{R}^1
                         \quad \text{with} \ p >2, 
   \label{cond:F1}
 \\[2mm]
   {\rm (ii)} & \ F(u) = F_s(u) + F_m^+(u) - F_m^-(u) 
      \quad \forall u \in \mathbb{R}^1 
          \quad \text{and} 
    \label{cond:F2} 
 \\
   \phantom{(ii)} & \ F_s(\cdot) \ \text{ is singleton and locally Lipschitz continuous on}    \ \mathbb{R}^1,   
  \notag
 \\ 
   \phantom{(ii)} & \ F_m^{\pm}(\cdot) : \mathbb{R}^1 \to 2^{\mathbb{R}^1} \  
  	  \text{ are maximal monotone operators such that} \ D(F_m^{\pm}) = \mathbb{R}^1.  
  	    	   \phantom{\qquad \qquad ~} 
    \notag 
  \end{align}
	\item[{\rm ($\gamma$)}] \ $\gamma : \mathbb{R}^1 \to 2^{\mathbb{R}^1}$   
	     is a (possibly multi-valued) maximal monotone operator
	      satisfying $ 0 \in \gamma(0)$.
\end{itemize}

  In view of assumptions $0 \in F(0)$ and $0 \in \gamma(0)$, we immediately see that 
    \eqref{A31} possesses the trivial solution $v \equiv 0$ with sections 
       $ 0=f(v) \in F(v), \ 0 = g(v) \in \gamma(v)$. 
        Let $u$ be any solution of \eqref{A31} with $ u_0(x)\geq 0$ 
         with sections $ f(u) \in F(u), \ g(u) \in \gamma(u) $ 
          satisfying the regularity required in Definition \ref{def:1}, 
            whose existence is assured in 
             Proposition \ref{LWP}, then applying Theorem \ref{com} with 
     $m=1; \ F_1 = F_2 = F; \ \beta_1 = \beta_2 = 0; \ 
        \gamma_1 = \gamma_2 =\gamma; \ a_1=0, \ a_2 = u_0; \ u_1 = v =0, \ u_2 = u$, 
             we conclude that $ u \geq 0$ as far as $u$ exists. 
   Here we use the fact that $ 0 = f(u_1)  \leq  \min \{ z ; z \in F(u) \} 
     \leq f(u_2) $ is assured a priori by \eqref{cond:F1} (see Remark \ref{R22}). 
     
   Since we are here concerned with only non-negative solutions, 
    the typical model of $F$ and $\gamma$ is given by 
      $F(u) = |u|^{p-2}u$ and $ \gamma(u) = |u|^{q-2}u$. 
   For this special case, when \(q < p\), i.e., the nonlinearity 
    inside the region is stronger than that at the boundary, 
      it might be straightforward to prove that there exist solutions of \eqref{A31} 
       which blow up in finite time by applying the same strategy 
        as that in \cite{PS3}.
   Even though, it is difficult to apply such a method to \eqref{A31} 
    for the case where \(q\geq p\), and to derive the existence of blow-up solutions 
     for this case by using the variational structure, one would need some 
       complicated classifications on parameters $(p,q)$ with 
        heavy calculations ( cf. \cite{RT}). 
   We emphasize that our method for showing the existence of 
     blow-up solutions relying on Theorem \ref{com} provides us 
      a much simpler device with wider applicability. 
    
   First we state the local existence result for \eqref{A31}.
}
\begin{proposition}\it\label{LWP}
	Let \(u_0\in L^\infty(\Omega)\), 
	 then there exists \(T_0=T_0(\|u_0\|_{L^\infty}) > 0\) such that \eqref{A31} 
	   possesses a solution \(u\) 
	      satisfying the following regularity
	\begin{equation}\label{reg:sol} 
	  u\in C([0,T_0];L^2(\Omega))\cap L^\infty(0,T_0;L^\infty(\Omega)), \quad
	    \sqrt{t}\partial_t u, \sqrt{t}\Delta u\in L^2(0,T_0;L^2(\Omega)).
	\end{equation}
	Moreover let \(T_m=T_m(u)\) be the maximal existence time of $u$,  
	 then the following alternative holds:
	\begin{itemize} 
		\item \(T_m = +\infty\) \  or 
		\item \(T_m < +\infty\), \(\lim_{t\to T_m}\|u(t)\|_{L^\infty} = +\infty\).
	\end{itemize}
\end{proposition}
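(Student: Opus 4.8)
\noindent
The plan is to recast \eqref{A31} as an abstract evolution equation in $H = L^2(\Omega)$ governed by a subdifferential operator, and then to apply the standard theory of such equations with non-monotone perturbations together with the regularizing effect of subdifferential flows (cf. \cite{B1,Ba1,O1}). First I would introduce the proper lower semicontinuous convex functional $\varphi : H \to (-\infty,+\infty]$ given, on its effective domain, by
\[
  \varphi(u) = \frac{1}{2}\int_\Omega |\nabla u|^2\,dx + \int_{\partial\Omega} j(u)\,d\sigma + \int_\Omega \widehat{F}_m^-(u)\,dx,
\]
where $j$ and $\widehat{F}_m^-$ are primitives of $\gamma = \partial j$ and of $F_m^-$, respectively, and $\varphi(u)=+\infty$ otherwise. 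A by-now standard computation shows that $\partial\varphi(u) \ni -\Delta u + \xi$ with $\xi \in F_m^-(u)$ and $-\partial_\nu u \in \gamma(u)$ on $\partial\Omega$, so that \eqref{A31} is equivalent to
\[
  \frac{du}{dt} + \partial\varphi(u) \ni F_s(u) + F_m^+(u), \qquad u(0) = u_0.
\]
Here the monotone term $F_m^-$ has been absorbed into the principal part, while the locally Lipschitz $F_s$ and the maximal monotone $F_m^+$ --- which now stands on the non-monotone side --- constitute the perturbation.

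Next I would solve a family of regularized problems, replacing $\gamma$ and $F_m^{\pm}$ by their Yosida approximations $\gamma_\lambda,\,F_{m,\lambda}^{\pm}$ and truncating $F_s$ outside a large ball, so that all nonlinearities become globally Lipschitz; each such problem admits a local solution $u_\lambda$ by a contraction mapping argument built on the $L^2$-smoothing estimates for the flow generated by $\partial\varphi_\lambda$. The decisive a priori bound is an $L^\infty$ estimate. Since a maximal monotone graph with full domain is bounded on bounded sets, the map $s \mapsto \sup\{z : z \in F(s)\}$ admits a continuous majorant $\Phi$, and the spatially constant function $\bar u(t)$ solving $\bar u' = \Phi(\bar u)$, $\bar u(0) = \|u_0\|_{L^\infty}$, satisfies $-\partial_\nu \bar u = 0 \le g$ for some $g \in \gamma(\bar u)$ because $\bar u \ge 0$ and $0 \in \gamma(0)$; thus $\bar u$ is a super-solution. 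A comparison argument in the spirit of Theorem \ref{com}, applied to the regularized problems, then gives $0 \le u_\lambda(t,x) \le \bar u(t)$ on the interval $[0,T_0]$ on which $\bar u$ remains finite, where $T_0$ depends only on $\|u_0\|_{L^\infty}$. On $[0,T_0]$ the orbit is thereby confined to a fixed bounded set, so the truncation is inactive and $F$ is effectively bounded along $u_\lambda$, uniformly in $\lambda$.

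With this confinement in hand, testing the regularized equation by $\partial_t u_\lambda$ and exploiting the convexity of $\varphi_\lambda$ yields, after multiplication by $t$, the weighted bounds for $\sqrt{t}\,\partial_t u_\lambda$ and for $t\,\varphi_\lambda(u_\lambda(t))$, and hence for $\sqrt{t}\,\Delta u_\lambda$ via elliptic regularity, all uniform in $\lambda$; the factor $\sqrt{t}$ reflects that $u_0$ belongs only to $L^\infty$ and not to $D(\varphi)$. An Aubin--Lions compactness argument then extracts a subsequence converging strongly in $L^2(0,T_0;L^2(\Omega))$ and a.e., along which the single-valued locally Lipschitz $F_s$ passes by continuity while the demiclosedness of the maximal monotone graphs $\gamma$ and $F_m^{\pm}$ identifies the weak limits of the corresponding sections; the limit $u$ is a solution of \eqref{A31} enjoying the regularity \eqref{reg:sol}.

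Finally, the blow-up alternative follows by a continuation argument resting on the fact that the existence time $T_0$ depends only on $\|u_0\|_{L^\infty}$: if $T_m < +\infty$ while $\|u(t_n)\|_{L^\infty}$ stayed bounded along some sequence $t_n \uparrow T_m$, then restarting the local existence from $u(t_n)$ would extend the solution by a further time $T_0$ depending only on that bound, and for $t_n$ sufficiently close to $T_m$ this would push the solution beyond $T_m$, contradicting maximality; hence $\|u(t)\|_{L^\infty} \to +\infty$ as $t \to T_m$. I expect the main obstacle to be the treatment of the maximal monotone term $F_m^+$, which sits on the non-monotone side of the equation: it is precisely the uniform $L^\infty$ bound furnished by the comparison super-solution that tames it, confining the orbit to a region on which $F_m^+$ is bounded and the whole perturbation is controllable. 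Without this bound neither the energy estimates nor the passage to the limit would close, and the crucial dependence of $T_0$ on $\|u_0\|_{L^\infty}$ would be lost.
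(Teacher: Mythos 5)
Your overall architecture coincides with the paper's: recast \eqref{A31} as an abstract evolution equation $\frac{d}{dt}u+\partial\varphi(u)+(\text{perturbation})\ni 0$ in $L^2(\Omega)$ with the nonlinear boundary condition encoded in $D(\partial\varphi)$, obtain an $L^\infty$ a priori bound yielding a local existence time $T_0$ depending only on $\|u_0\|_{L^\infty}$, and deduce the blow-up alternative by the restart argument (your last paragraph is essentially identical to the paper's). The genuine divergence is in how the $L^\infty$ bound is produced: the paper uses the $L^\infty$-energy method --- it adds the cut-off graph $\beta_M=\partial I_{K_M}$ to $\partial\varphi$, solves the constrained problem globally by citing \cite{O}, multiplies by $|u|^{r-2}u$, and lets $r\to\infty$ to get $\|u(t)\|_{L^\infty}\le\|u_0\|_{L^\infty}+\int_0^t\ell(\|u(s)\|_{L^\infty})\,ds$ --- whereas you compare with a spatially constant ODE super-solution $\bar u$.

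This is where your argument has a real gap. Assumption $(\gamma)$ only requires that $\gamma$ be maximal monotone with $0\in\gamma(0)$; it does \emph{not} require $D(\gamma)=\mathbb{R}$. In particular the Dirichlet graph $\gamma^D$ of \eqref{BCD}, with $D(\gamma^D)=\{0\}$, satisfies $(\gamma)$, and more generally $D(\gamma)$ may be bounded above. For such $\gamma$ your constant $\bar u(t)>0$ has $\gamma(\bar u(t))=\emptyset$, so there is no section $g\in\gamma(\bar u)$ at all and $\bar u$ is simply not a super-solution of the boundary-value problem in the sense of Definition \ref{def:1}; the step ``$-\partial_\nu\bar u=0\le g$ for some $g\in\gamma(\bar u)$'' presupposes $\bar u(t)\in D(\gamma)$. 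One can try to repair this by giving $\bar u$ a different boundary graph $\gamma_2$ with full domain and invoking (A3)(ii), but that condition then fails unless you already know that $u\ge0$ on $\partial\Omega$ (so that its boundary sections are nonnegative), which entangles the upper and lower bounds and still requires regularity of the approximate solutions that your contraction-mapping construction does not yet provide. The paper's route avoids the issue entirely: the only boundary input needed for the $L^r$ estimate is $g\,|u|^{r-2}u\ge0$, which follows from $0\in\gamma(0)$ and monotonicity alone, with no hypothesis on $D(\gamma)$ and no two-sided sign information. (Your absorption of $F_m^-$ into $\varphi$ and the Yosida/Aubin--Lions limit passage are workable substitutes for the paper's appeal to \cite{O}, and are not where the difficulty lies.) To close the proof you should replace the comparison step by the $L^r\to L^\infty$ iteration, or restrict to graphs with $D(\gamma)\supset[0,\infty)$ and first establish $u_\lambda\ge0$ rigorously.
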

{
\begin{proof}
  Since $\gamma$ is assumed to be maximal monotone, there exists a lower semi-continuous 
   convex function $j : \mathbb{R}^1 \to (- \infty, + \infty] $ such that 
     $j(r) \geq 0,$ and $\partial j(u) = \gamma(u)$ ( see \cite{B0}). 
  
   Define the functional $\varphi$ on \(L^2(\Omega)\) by 
  \begin{equation*}
    \varphi(u) = 
      \begin{cases}
       \displaystyle\frac{1}{2} \!\!\int_{\Omega} \!\! |\nabla u|^2 dx 
         +  \frac{1}{2} \!\!\int_{\Omega} \!\! |u|^2 dx
           + \int_{\partial\Omega} \!\!\! j(u) d\sigma 
       & u \in D(\varphi) := \{u\in H^1(\Omega) ;  j(u) \in L^1(\partial\Omega)\},
  \vspace{3mm}\\
        + \infty  
       &  u \in L^2(\Omega) \setminus D(\varphi).
     \end{cases}
  \end{equation*}
   Then we can see that \(\varphi \) is a lower semi-continuous convex function on $L^2(\Omega)$
     and the subdifferential operator $\partial \varphi$ associated with \(\varphi\) 
     is given as follows (see \cite{Ba1,B0,B1}):
\begin{equation*}
  \begin{cases}
    \  \partial\varphi(u) = -\Delta u + u,  
\\[2mm]
     \  D(\partial\varphi) = \{u\in H^2(\Omega) ~\!;~\! 
                               - \partial_\nu u(x) \in \gamma(u(x))  
    \quad\mbox{a.e. on }\partial\Omega\}.
  \end{cases}
\end{equation*}
  Furthermore the following elliptic estimate for \(\partial\varphi\) holds, i.e., 
    there exist some constants \(c_1\), \(c_2>0\) such that
\begin{equation}\label{elliptic}
    \|u\|_{H^2} \le c_1\|-\Delta u + u \|_{L^2} + c_2 
      \quad \forall u \in D(\partial \varphi).
\end{equation}
  Then by putting $ B(u) := - u - F(u)$, \eqref{A31} can be reduced to 
   the following abstract evolution equation 
    in $H =L^2(\Omega)$: 
\begin{equation*}
 {\rm (CP)} \ 
  \begin{cases}
    \ \displaystyle\frac{d}{dt} u(t) + \partial \varphi(u(t)) + B(u(t)) \ni 0,  
     \quad t>0, 
 \\[2mm]
     \ u(0) = u_0.  
   \end{cases}
\end{equation*}
  In order to show the existence of time local solutions of (P)$^\gamma_F$ belonging to 
   $L^\infty(\Omega)$, we rely on ``$L^\infty$-Energy Method'' 
     developed in \cite{O1}. 
  To this end, we introduce another maximal monotone graph 
   $\beta_M(\cdot) = \partial \eta_M(\cdot)$ 
      on $\mathbb{R}^1 \times \mathbb{R}^1$ by
\begin{equation*}
  \beta_{M}(r) = 
    \begin{cases}
     \ \emptyset & \quad |r|>M, 
\\ 
       \ (-\infty,0] & \quad r = - M, 
\\
        \ 0 & \quad |r| < M,   
\\ 
        \ [0, + \infty ) & \quad  r = M, 
    \end{cases} 
 \quad \quad  
   \eta_M(r) =
    \begin{cases}
      \ 0 & \quad |r| \leq M,   
\\[2mm] 
       \  + \infty  & \quad  |r| > M, 
    \end{cases}    
\end{equation*}  
   The realizations of $\beta_M$ and $\eta_M$ in $H=L^2(\Omega)$ 
     are given by  
\begin{align*}
 & \beta_{M}(u) = \partial I_{K_M}(u) =  
    \begin{cases}
       \ \emptyset & \quad |u(x)|>M, 
\\ 
       \ (-\infty,0] & \quad u(x) = - M, 
\\
       \ 0 & \quad |u(x)| < M,   
\\ 
       \ [0, + \infty ) & \quad  u(x) = M, 
    \end{cases} 
\\[3mm] 
  & I_{K_M}(u) := 
    \begin{cases}
     \ 0  & u \in K_M := \{ ~\! u \in L^2(\Omega) ~\! ; ~\! |u(x)| \leq M 
      \quad a.e. \ x \in \Omega ~\! \}, 
  \\[2mm]
      \ + \infty & u \in L^2(\Omega) \setminus K_M.
    \end{cases}     
\end{align*}  
  Here we put 
\begin{equation*}
  \varphi_M(u) := \varphi(u) + I_{K_M}(u). 
\end{equation*}
  Then we can get
\begin{equation}\label{prop:varphiM}
  \partial \varphi_M(u) = \partial \varphi(u) + \beta_{M}(u) 
    \quad \forall u \in D(\partial \varphi_M) := D(\partial \varphi) \cap K_M.
\end{equation}
  In fact, since the Yosida approximation $(\beta_M)_\lambda(\cdot) $ of 
    $\beta_M(\cdot)$ is given by 
\begin{equation*}
  (\beta_M)_\lambda(u) = 
     \begin{cases}
       \frac{u(x) + M}{\lambda} & \quad u(x) \leq - M, \hspace{40mm} ~
   \\[1mm]
       0 & \quad |u(x)| < M,
    \\[1mm]
        \frac{u(x) - M}{\lambda} & \quad u(x) \geq M,
     \end{cases}
\end{equation*}
     we easily see 
\begin{align}
  (\partial \varphi(u), (\beta_M)_\lambda (u) )_{L^2} 
    & = \int_\Omega ( - \Delta u + u) (\beta_M)_\lambda (u)~\!  dx  \notag
\\ 
    & \geq \int_\Omega (\beta_M)_\lambda '(u) |\nabla u(x)|^2 dx 
           + \int_{\partial \Omega}  - \partial_\nu u(x)~\!
         (\beta_M)_\lambda (u(x)) ~\! d\sigma 
              \geq 0.  \label{est:angle}
\end{align}
  Here we used the fact that $ u \cdot (\beta_M)_\lambda (u) \geq 0$, 
    $(\beta_M)_\lambda '(u) \geq 0$, $ - \partial_\nu u(x) \in \gamma(u(x))$ 
     and $0 \in \gamma(0)$ implies that 
      $\gamma(u) \subset (-\infty,0]$ if $u \leq 0$ and 
         $\gamma(u) \subset [0, +\infty)$ if $u \geq 0$.
   
   Consequently \eqref{est:angle} together with Theoreme 4.4 and Proposition 2.17 in \cite{B0} assures that $ \partial \varphi + \partial I_M $ becomes 
     maximal monotone. 
      Hence since $ \partial \varphi (u) + \partial I_M (u) \subset 
        \partial \varphi_M(u)$ is obvious, we can conclude that 
           \eqref{prop:varphiM} holds true.

   Now consider the following auxiliary equation:
\begin{equation*}
   {\rm (CP)}_M \ 
     \begin{cases}
       \ \displaystyle\frac{d}{dt} u(t) + \partial \varphi_M(u(t)) + B(u(t)) \ni 0,  
        \quad t>0, 
 \\[2mm]
        \ u(0) = u_0,  
      \end{cases}
\end{equation*} 
 where we choose $M>0$ such that 
\begin{equation}
     M := \|u_0\|_{L^\infty} +2.
\end{equation}
  Then we easily see that $u_0 \in \overline{D(\partial \varphi_M)}^{L^2} = K_M$.

   Define a monotone increasing function 
      $\ell(\cdot) : [0,\infty) \to [0,\infty)$ by 
\begin{equation}\label{derf:ell}
  \ell(r) := r + \sup \ \{ ~\! |z| ~\!; ~\! z \in F(\tau), \quad 
                                         |\tau| \leq r ~\!  \}.            
\end{equation}
  Here we note that $\ell(\cdot)$ takes a finite value for any finite $r$, 
    which is assured by assumption $ D(F) = D(F_m^+)=D(F_m^-)= \mathbb{R}^1$  
     and then we obtain  
\begin{equation}\label{est:B:ell}
    \sup \ \{ ~\! |z| ~\! ; ~\! z \in B(u(x)) ~\! \} 
      \leq \ell(|u(x)|).
\end{equation}
   Hence we get 
\begin{equation}\label{est:B}
    ||| B(u) |||_{L^2} := \sup \ \{ \|z\|_{L^2} ; z \in B(u) \} 
                   \leq \ell(\|u\|_{L^\infty}) ~\! |\Omega|^{1/2} 
                   \leq  \ell(M) ~\! |\Omega|^{1/2} \quad \forall u \in D(\partial \varphi_M), 
\end{equation}
   since $u \in D(\partial \varphi_M)$ implies $ \|u\|_{L^\infty} \leq M$. 
    Now we are going to check some assumptions required in \cite{O}. 
     It is easy to see that \eqref{est:B} assures assumption (A5)
       of Theorem III and (A6) of Theorem IV in \cite{O} by taking $H=L^2(\Omega)$. 
        Furthermore the compactness assumption (A1), 
         the set $\{ u ; \varphi_M(u) \leq L \}$ is compact in $H:= L^2(\Omega)$, 
           is obviously satisfied, since $\Omega$ is bounded; and the demiclosedness  
            assumption (A2) is also assured, since the maximal monotone parts 
              $F_m^{\pm}$ are always demiclosed in $L^2(\Omega)$. 
   Thus we can apply Theorem III and Corollary IV of \cite{O} to conclude that 
     \eqref{A31} admits a solution $u$ on $[0,T]$ 
       for any $T>0$ satisfying \eqref{reg:sol} with $T_0$ replaced by $T$.  
     
  Now we are going to show that there exists $T_0>0$ such that 
\begin{equation}\label{est:u:above}
   \|u(t)\|_{L^\infty} \leq M + 1 \qquad \forall t \in [0,T_0], 
\end{equation}
   whence follows $\beta_M(u(t)) = 0$ for all $t \in [0,T_0]$, 
     which implies that $u$ turns out to be the desired solution of the original equation 
       \eqref{A31} on $[0,T_0]$.
         
  To see this, multiplying (CP)$_M$ by $|u|^{r-2}u$, we get by \eqref{est:B:ell} 
\begin{align*}
  \frac{1}{r} \frac{d}{dt} \|u(t)\|_{L^r}^r 
    \!+ (r-1) \!\int_\Omega \!\! |u|^{r-2} |\nabla u(t)|^2 dx 
     + \!\!\int_{\partial \Omega} \!\!\! g(t,x) ~\! |u|^{r-2} u(t) d\sigma 
       \leq  \ell(\|u(t)\|_{L^\infty}) \|u(t)\|_{L^r}^{r-1} |\Omega|^{1/r},
\end{align*} 
  where $ g(t,x) \in \gamma(u(t,x))$ and so $g(t,x) ~\! |u|^{r-2} u(t,x) \geq 0 $. 
 Hence 
\begin{equation*}
  \frac{d}{dt} \|u(t)\|_{L^r} \leq  \ell(\|u(t)\|_{L^\infty}) ~\! |\Omega|^{1/r}. 
\end{equation*} 
  Letting $ r \to \infty$, we obtain (see \cite{O1}) 
\begin{equation}\label{est:u:Linfty}
  \|u(t)\|_{L^\infty} \leq \|u_0\|_{L^\infty} + \int_0^t \ell(\|u(s)\|_{L^\infty}) ds.
\end{equation}
   Then Lemma 2.2 of \cite{O1} assures that if we set 
\begin{equation}\label{def:T0}
  T_0 := \frac{1}{2 \ell( \|u_0\|_{L^\infty}  + 1)}, 
\end{equation} 
  then \eqref{est:u:above} holds true.  
 
 In order to prove the alternative part, assume that $T_m < \infty$ and 
   $\liminf_{t \to T_m} \|u(t)\|_{L^\infty} =: M_0 < \infty$. 
      Then there exists a sequence $\{t_n\}_{n \in \mathbb{N}}$ such that 
\begin{equation}\label{prp:tn}
  t_n \to T_m \quad \text{as} \ n \to \infty \quad \text{and} \quad
   \|u(t_n) \|_{L^\infty} \leq M_0 + 1 \qquad  \forall n \in \mathbb{N}.
\end{equation} 
   Hence in view of \eqref{def:T0}, the definition of $T_0$, regarding 
     $u(t_n)$ as an initial data, we find that $u$ can be continued up to 
       $ t_n + \frac{1}{ 2 \ell(M_0 + 2)} $ which becomes strictly larger than 
          $T_m$ for sufficiently large $n$ such that 
             $ T_m - t_n < \frac{1}{ 4 \ell(M_0 + 2)} $. This leads to a contradiction. 
               Thus the alternative assertion is verified.
\end{proof}
\begin{remark}\label{remark3.2}
{\rm (1)} \ One can prove that under the same assumptions in Proposition \ref{LWP}, 
       problem {\rm (P)}$^\gamma_F$ with the boundary condition replaced by the 
        homogeneous Dirichlet (resp. Neumann) boundary condition, dented by 
         {\rm (P)}$^D_F$ ( resp. {\rm (P)}$^N_F$), admits a time local solution $u$ 
            satisfying \eqref{reg:sol}, which is denoted by $u^D_F$ (resp. $u^N_F$).
             To do this, it suffices to repeat the same arguments 
              as those in the proof of Proposition \ref{LWP} with obvious modifications 
               such as $j(\cdot)\equiv 0, D(\varphi) = H^1_0(\Omega)$ 
                (resp. $D(\varphi)=H^1(\Omega)$).
\\[1mm]
{\rm (2)} \ If assumption {\rm (F)} is satisfied with $F_m^- \equiv 0$, then the solution 
       of {\rm (P)}$^\gamma_F$ (or {\rm (P)}$^D_F$, {\rm (P)}$^N_F$) given in 
         Proposition \ref{LWP} is unique.   	
\end{remark}  
  
  Our result on the existence of solutions of \eqref{A31} which blow up 
    in finite time can be formulated in terms of the following eigenvalue problem:  
}
\begin{equation}\label{eigen}
\left\{
\begin{aligned}
& -\Delta ~\! \phi = \lambda ~\! \phi, && x\in\Omega,\\
& \phantom{-\Delta ~\! \ } \phi = 0, &&x\in\partial\Omega.
\end{aligned}
\right.
\end{equation}
  {  
  	Let $\lambda_1>0$ be the first eigenvalue of \eqref{eigen} and 
  	  $\phi_1$ be the associated positive eigenfunction normalized 
  	    by $\int_\Omega \phi_1(x) dx =1$. 
  
  We here consider the following fully studied problem:
\begin{equation*}
{\rm (P)}^D_p \ 
\left\{
\begin{aligned}
& \partial_t u -\Delta u = |u|^{p-2}u,&&\quad\quad\quad t>0,~x\in\Omega,\\
& u = 0, &&\quad\quad\quad t>0, ~ x\in\partial\Omega,\\
& u(0,x)=u_0(x)\ge 0,&&\quad\quad\quad x\in\Omega,
\end{aligned}
\right.
\end{equation*}  
  It is well known that (P)$^D_p$ admits the unique time local solution $u_p^D$ 
   for any $u_0 \in L^\infty(\Omega)$ and $T_m(u_p^D) < \infty$ if $u_0$ 
     satisfies 
\begin{equation}\label{cond:bu:D}
  u_0 \in L^\infty(\Omega), \quad 0 \leq u_0(x) \quad a.e. \ x \in \Omega, 
    \quad \text{and} \quad 
      \int_{\Omega} u_0(x) ~\! \phi_1(x) ~\! dx > \lambda_1^{\frac{1}{p-2}},  
\end{equation}
  which is proved by the so-called Kaplan's method. 

   By comparing the solution $u$ of \eqref{A31} with $u^D_p$, we obtain the following result.  
\begin{proposition}\label{prop3.3}\it
	Assume that \(u_0\) satisfies \eqref{cond:bu:D} 
	   and let $u^\gamma_F$ be any solution of \eqref{A31}, then 
	     \( T_m(u^\gamma_F) \leq T_m(u^D_p)<\infty\), 
	        i.e., $u^\gamma_F$ blows up in finite time.
\end{proposition}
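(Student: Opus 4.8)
The plan is to deduce the finite-time blow-up of $u^\gamma_F$ from the already known blow-up of the Dirichlet solution $u^D_p$ by establishing the pointwise bound $u^D_p \le u^\gamma_F$ on the common interval of existence via Theorem \ref{com}, and then transferring the blow-up through the alternative in Proposition \ref{LWP}.

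First I would set up the comparison with $m=1$ and $a_{ij}=\delta_{ij}$ (so $\lambda=1$), taking $U_1=u^D_p$ as a sub-solution of $(\mathrm{P})_1=(\mathrm{P})^D_p$ and $U_2=u^\gamma_F$ as a super-solution of $(\mathrm{P})_2=(\mathrm{P})^\gamma_F$, with $\beta_1=\beta_2=0$, $F_1(u)=|u|^{p-2}u$, $F_2=F$, $\gamma_1=\partial I_{\{0\}}$ (encoding the homogeneous Dirichlet condition), $\gamma_2=\gamma$, and $a_1=a_2=u_0$. Both solutions have the regularity \eqref{reg:u:k} by \eqref{reg:sol}, so Theorem \ref{com} is applicable on each $[0,T]$ with $T<\min(T_m(u^D_p),T_m(u^\gamma_F))$. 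Here (A1) holds with equality and (A2) holds in the form (i) since $\beta_1=\beta_2=0$. For (A4) the decisive point is \eqref{cond:F1}: for every $u\in\mathbb{R}$ one has $|u|^{p-2}u\le |u|^{p-2}u^+\le \inf\{z;z\in F(u)\}$ (the two sides agree for $u\ge 0$, while for $u<0$ the left side is negative and the middle vanishes), which is exactly (A4)(i), both extremes being finite because $F$ has full domain with bounded-interval values; moreover $F_1(u)=|u|^{p-2}u$ is single-valued and $C^1$ with $F_1'(u)=(p-1)|u|^{p-2}$ locally bounded, so (SC) holds, where \eqref{cond:F:sc1} is vacuous for $m=1$ by Remark \ref{R22}(3). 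This gives (A4)(ii).

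The main obstacle is assumption (A3), since the two boundary conditions are genuinely different ($\gamma_1=\partial I_{\{0\}}$ versus $\gamma_2=\gamma$) and none of the literal forms (i)--(iii) need hold when, e.g., $D(\gamma)=\mathbb{R}$. I would resolve this exactly as (A3)(iii) enters the proof of Theorem \ref{com}: on $\partial\Omega$ the Dirichlet solution satisfies $u^D_p=0$, while $u^\gamma_F\ge 0$, the non-negativity having already been obtained in \S 3.1 by comparing $u^\gamma_F$ with the trivial solution. Hence the trace of $w:=u^D_p-u^\gamma_F$ is non-positive, so $(w)^+|_{\partial\Omega}=0$ and the boundary integral $I_{\partial\Omega}$ vanishes. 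With the interior term controlled as in Theorem \ref{com} ($\beta_1=\beta_2=0$ makes \eqref{est:positive:gamma} an equality) and the reaction handled through the single-valued, (SC)-regular $F_1$, the Gronwall argument yields $u^D_p(t,x)\le u^\gamma_F(t,x)$ for all $t$ in the common interval of existence.

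Finally I would transfer the blow-up. Under \eqref{cond:bu:D} the Dirichlet solution satisfies $T_m(u^D_p)<\infty$ with $\|u^D_p(t)\|_{L^\infty}\to\infty$ (Kaplan's method). Since $0\le u^D_p\le u^\gamma_F$ on $[0,\min(T_m(u^D_p),T_m(u^\gamma_F)))$, we have $\|u^\gamma_F(t)\|_{L^\infty}\ge \|u^D_p(t)\|_{L^\infty}$ there. If $T_m(u^\gamma_F)>T_m(u^D_p)$, then $u^\gamma_F\in L^\infty(0,T;L^\infty(\Omega))$ for some $T>T_m(u^D_p)$ by \eqref{reg:sol}, forcing $\|u^D_p(t)\|_{L^\infty}$ to remain bounded as $t\to T_m(u^D_p)$, a contradiction. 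Hence $T_m(u^\gamma_F)\le T_m(u^D_p)<\infty$, and the alternative in Proposition \ref{LWP} shows that $u^\gamma_F$ blows up in finite time.
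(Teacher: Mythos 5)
Your proposal is correct and follows the same overall route as the paper: compare $u^D_p$ (sub-solution) with $u^\gamma_F$ (super-solution) via Theorem \ref{com} with $m=1$, $\beta_1=\beta_2=0$, $F_1(u)=|u|^{p-2}u$, $F_2=F$, use \eqref{cond:F1} for (A4)(i) and the $C^1$-regularity of $|u|^{p-2}u$ for (SC), and then transfer the blow-up through the alternative of Proposition \ref{LWP}. The one place you diverge is (A3): you correctly observe that with $\gamma_2=\gamma$ of possibly full domain none of (i)--(iii) holds literally, and you fix this by re-entering the proof of Theorem \ref{com} and noting that $u^D_p=0\le u^\gamma_F$ on $\partial\Omega$ forces $(w)^+|_{\partial\Omega}=0$, so the boundary integral vanishes. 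The paper instead arranges for (A3)(iii) to hold verbatim: it replaces the Dirichlet condition by the graph $\gamma^D$ of \eqref{BCD} with $D(\gamma^D)=\{0\}$, and replaces $\gamma$ by the extension $\gamma_e$ of \eqref{BCgammae} with $D(\gamma_e)\subset[0,+\infty)$, checking that $\gamma_e$ is still monotone and that the non-negative solution $u^\gamma_F$ still satisfies $-\partial_\nu u^\gamma_F\in\gamma_e(u^\gamma_F)$. The two resolutions rest on the same fact (non-negativity of $u^\gamma_F$, obtained earlier by comparison with the trivial solution); the paper's device has the advantage of letting Theorem \ref{com} be applied as a black box rather than reopening its proof, which is worth adopting if you want a strictly modular argument.
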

\begin{proof}
   We apply Theorem \ref{com} with $m=1, \ a_{i,j} = \delta_{i,j}$ and  
     $\beta_1=\beta_2 = 0, \ a_1=a_2=u_0$. 
       Then (A1) and (A2) are automatically satisfied. As for (A4), 
         we take $F_1(t,x,u) = |u|^{p-2}u$ and $F_2(t,x,u) = F(u)$, 
           then \eqref{cond:F1} assures (i) of (A4), and it is clear  
             that $F_1$ satisfies (SC), since $F_1$ is of $C^1$-class 
               with respect to $u$. 
      As for the boundary conditions, we set          
\begin{align}
  & \gamma_1(r) = \gamma^D(r):= 
  	 \begin{cases}
	  \ \mathbb{R}^1 \quad &  \ \mbox{for} \ r = 0,
\\[1mm]
	  \ \emptyset \quad    & \mbox{ for} \ r \neq 0, \phantom{spacespace}
	\end{cases} \label{BCD}
\\[1mm]
   & \gamma_2(r) = \gamma_e(r) := \
 	  \begin{cases}
       \  \gamma(r)  \quad  &  \ \mbox{for} \ r > 0,
\\[1mm]
       \ (- \infty,0] \cup \gamma(0) \quad & \ \mbox{for} \ r = 0,
\\[1mm]
       \ \emptyset \quad & \mbox{ for} \ r < 0. \phantom{spacespace}       
    \end{cases}   \label{BCgammae}   
\end{align}

   Then we can easily see that $\gamma_2$ is monotone, i.e., 
     $(z_1-z_2) (r_1 - r_2) \geq 0$ for all $[r_1,z_1], [r_2, z_2] \in \gamma_2$. 
      In fact, this is obvious when $r_i >0$ or $r_i=0 \ (i=1,2)$. 
       Let $r_1>0$ and $r_2=0$, then $z_2 \in \gamma(0)$ or $z_2 \in (-\infty,0]$.
        If $z_2 \in \gamma(0)$, the monotonicity of $\gamma$ assures the assertion; 
         and if $z_2 \in (-\infty,0]$, then since $0 \in \gamma(0)$ implies 
          $z_1\geq 0$, we get $(z_1-z_2) (r_1 - r_2) \geq z_1 ~\! r_1 \geq 0$.

   Since $ \gamma(r) \subset \gamma_2(r)$ for all $r\geq 0$ and 
     $u^\gamma_F(t,x) \geq 0 \ a.e. \ (t,x) \in \Gamma_T$, 
      which is assured by $u^\gamma_F(t,x) \geq 0 \ a.e. \ (t,x) \in Q_T$, 
       $u^\gamma_F(t,x) $ satisfies $ - \partial_\nu u^\gamma_F(t,x) \in \gamma_2(u^\gamma_F(t,x)) \ a.e. \ (t,x) \in \Gamma_T$.

  On the other hand, $ - \partial_\nu u^D_p(t,x) \in \gamma_1(u^D_p)$ 
   implies $ u^D_p(t,x) \in D(\gamma_1) = \{0\}$ 
    and $ - \partial_\nu u^D_p(t,x) \in \mathbb{R}^1$, 
      i.e., $u^D_p(t,x)$ obeys the homogeneous Dirichlet boundary condition 
       (see \cite{B0,B1,Ba1}). 

   Thus since $D(\gamma_1)=\{0\}$ and $D(\gamma_2) \subset [0,+\infty)$, (iii) of (A2) 
     is satisfied. Consequently, applying Theorem \ref{com}, we find that      
	\begin{equation*}
	   0 \leq u^D_p(t,x) \le u^\gamma_F(t,x) \quad\quad \forall t \in [0,T) \ \ a.e.\ x  \in \Omega,
	\end{equation*}
	where \(T = \min \ ( T_m(u^\gamma_F),T_m(u^D_p) ) \),
	   whence follows 
	\begin{equation}\label{Dblow}
	\|u^D_p(t)\|_{L^\infty} \le \|u^\gamma_F(t)\|_{L^\infty} \quad\quad \forall t \in [0,T).
	\end{equation}
	 Here suppose that \(T_m(u^D_p) < T_m(u^\gamma_F) \), then it follows from \eqref{Dblow} that
	\begin{equation*}
	    \lim_{t\to T_m(u^D_p)} \|u^\gamma_F(t)\|_{L^\infty} = +\infty,
	\end{equation*}
	which contradicts the definition of $T_m(u^\gamma_F)$. 
	  Hence we conclude that $T_m(u^\gamma_F) \leq T_m(u^D_p) < + \infty$.  	
\end{proof}
  As the special case where $F(u) = |u|^{p-2}u$, we get the following 
    (see (2) of Remark \ref{remark3.2}).
\begin{corollary}\label{cor3.4}\it
	Assume that \(u_0\) satisfies \eqref{cond:bu:D} 
	 and let $u^\gamma_p$ be the unique solution of \eqref{A31} 
	  with $F(u) = |u|^{p-2}u$, denoted by {\rm (P)}$^\gamma_p$,   
	   then \( T_m(u^\gamma_p) \leq T_m(u^D_p)<\infty\), 
	     i.e., $u^\gamma_p$ blows up in finite time.
\end{corollary}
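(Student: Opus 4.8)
The plan is to obtain Corollary \ref{cor3.4} as an immediate specialization of Proposition \ref{prop3.3}, the only genuine work being to confirm that the particular nonlinearity $F(u) = |u|^{p-2}u$ fits the standing hypothesis (F) and that the resulting solution is unique, so that the phrase ``the unique solution $u^\gamma_p$'' is well defined.

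First I would verify assumption (F) for $F(u) = |u|^{p-2}u$. Since $p>2$, the map $u \mapsto |u|^{p-2}u$ is single-valued and of class $C^1$ on $\mathbb{R}^1$, hence locally Lipschitz continuous; thus I set $F_s(u) = |u|^{p-2}u$ and $F_m^{\pm} \equiv 0$, so that the decomposition \eqref{cond:F2} holds trivially with $D(F_m^{\pm}) = \mathbb{R}^1$. For \eqref{cond:F1}, the condition $0 \in F(0)$ is clear, and for $u \ge 0$ one has $F(u) = u^{p-1} = |u|^{p-2}u^+$, so the lower bound $\inf\{z;z\in F(u)\} \ge |u|^{p-2}u^+$ holds with equality on the non-negative range. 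Since the comparison argument recorded just before Proposition \ref{LWP} already guarantees that any solution with $u_0 \ge 0$ satisfies $u^\gamma_p \ge 0$ as long as it exists, only this non-negative range is relevant here.

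Next, because the decomposition of $F$ has $F_m^- \equiv 0$, part (2) of Remark \ref{remark3.2} applies and yields uniqueness of the solution of \eqref{A31} with $F(u) = |u|^{p-2}u$; this legitimizes the notation $u^\gamma_p$ for the unique solution of (P)$^\gamma_p$, whose local existence and blow-up alternative are furnished by Proposition \ref{LWP}.

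Finally, I would invoke Proposition \ref{prop3.3}. That proposition is stated for \emph{any} solution $u^\gamma_F$ of \eqref{A31} under hypothesis (F); taking $F(u) = |u|^{p-2}u$, which we have just checked satisfies (F), its conclusion $T_m(u^\gamma_F) \le T_m(u^D_p) < \infty$ applies verbatim to $u^\gamma_p$. This gives $T_m(u^\gamma_p) \le T_m(u^D_p) < \infty$, i.e., $u^\gamma_p$ blows up in finite time, as claimed. The only delicate point in the whole argument is the verification of \eqref{cond:F1}, where the literal bound fails for $u<0$; I expect this to be the main (and indeed only) obstacle, and it is dispensed with precisely by the a priori non-negativity of the solution established before Proposition \ref{LWP}.
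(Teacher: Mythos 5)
Your proposal is correct and follows essentially the same route as the paper, which states Corollary \ref{cor3.4} without a separate proof beyond the pointer to (2) of Remark \ref{remark3.2}: one checks that $F(u)=|u|^{p-2}u$ fits hypothesis (F) (taking $F_s=F$, $F_m^{\pm}\equiv 0$, whence uniqueness), and then applies Proposition \ref{prop3.3} verbatim. Your observation that \eqref{cond:F1} literally fails for $u<0$ is the same point the paper itself glosses over by restricting attention to non-negative solutions, so your handling of it is consistent with the paper's intent.
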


    We next consider another typical classical boundary condition, namely, 
     the following problem with the homogeneous Neumann boundary condition: 
\begin{equation*}
	{\rm (P)}^N_p \ \left\{
	  \begin{aligned}
	& \partial_t u -\Delta u = |u|^{p-2}u,&&\quad\quad\quad t>0,~x\in\Omega,
\\
	& \partial_\nu u = 0, &&\quad\quad\quad t>0, ~ x\in\partial\Omega,
\\
	& u(0,x)u = u_0(x)\ge 0, &&\quad\quad\quad x\in\Omega.
	\end{aligned}
	\right.
	\end{equation*}
	   Then it is also well known that (P)$^N_p$ admits the unique positive local 
	    solution $u_p^N$ for any $ 0 \leq u_0 \in L^\infty(\Omega)$ and $T_m(u_p^N) < \infty$ 
	      if $u_0$ is not identically zero in $\Omega$. 
	      
	    Let $u^N_F$ be any solution of (P)$^N_F$ (see Remark \ref{remark3.2}), 
	     and we apply Theorem \ref{com} with 
	       $m=1, \ a_{i,j} = \delta_{i,j} $ and  
	         $ \beta_1 = \beta_2 = 0$, \ $\gamma_1=\gamma_2 = \gamma^N :\equiv 0, \ a_1=a_2=u_0$. 
	          Then (A1), (A2) and (A3) are automatically satisfied. 
	     As for (A4), we take $F_1(t,x,u) = |u|^{p-2}u$ and $F_2(t,x,u) = F(u)$, 
	      then \eqref{cond:F1} assures (i) of (A4), and it is clear  
	        that $F_1$ satisfies (SC).  Then we get 
\begin{equation}\label{est:uN:pF}
   \|u^N_p(t)\|_{L^\infty} \leq \|u^N_F(t)\|_{L^\infty}  
      \quad \forall t \in [0,T) \quad \text{with} \ 
        T = \min \ (T_m(u^N_p), T_m(u^N_F)), 
\end{equation}
	    whence follows 
\begin{equation}\label{est:Tm:uN}
    T_m(u^N_F) \leq T_m(u^N_p). 
\end{equation}
     We now compare (P)$^N_p$ with (P)$^\gamma_p$, i.e., (P)$^\gamma_F$ 
       with $F(u)=|u|^{p-2}u$. Let $u^\gamma_p$ be the unique non-negative solution 
         of (P)$^\gamma_p$ ( cf. (2) of Remark \ref{remark3.2} ). 
     We apply Theorem \ref{com} with 
       $m=1, \ a_{i,j} = \delta_{i,j} $ and  
       $  \beta_1=\beta_2 = 0, \ a_1=a_2=u_0, \ F_1(u)=F_2(u)=|u|^{p-2}u$. 
    Then (A1), (A2) and (A4) are satisfied. As for (A3),  
      define $\gamma_1(\cdot)$ and $\gamma_2(\cdot)$ by 
\begin{equation*}
  \gamma_1(r) = \gamma_e(r) := \
     \begin{cases}
       \  \gamma(r)  \quad  &  \mbox{for} \ r > 0,
\\[1mm]
       \ (- \infty,0] \cup \gamma(0) \quad &  \mbox{for} \ r = 0,
\\[1mm]
       \ \emptyset \quad & \mbox{for} \ r < 0,\phantom{sp}        
    \end{cases}  
    \gamma_2(r) = \gamma^N_e(r) := \
      \begin{cases}
        \  0  \quad  &   \mbox{for} \ r > 0,
\\[1mm]
        \ (- \infty,0]  \quad &  \mbox{for} \ r = 0,
\\[1mm]
        \ \emptyset \quad & \mbox{for} \ r < 0.        
      \end{cases}     
\end{equation*}  
  Then we can show that $\gamma_1, \gamma_2$ are monotone by the same reasoning 
    as that for \eqref{BCgammae}. 

  Moreover since $\gamma(r) \subset \gamma_1(r)$ and \ $ 0 \equiv \gamma^N(r) \subset \gamma_2(r)$ 
   for $r\geq 0$, and $u^\gamma_p(t,x), u^N_p(t,x) \geq 0 \ a.e. \ (t,x) \in \Gamma_T$  
    are assured by $u^\gamma_p(t,x), u^N(t,x) \geq 0 \ a.e. \ (t,x) \in Q_T$,  
      we get $ - \partial_\nu u^\gamma_p(t,x) \in \gamma_1(u^\gamma_p(t,x))$ and  
        $ - \partial_\nu u^N_p(t,x) \in \gamma_2(u^N_p(t,x)) $   
          for $ a.e. \ (t,x) \in \Gamma_T$. 
          
   Furthermore for any $ r_1 \in D(\gamma_1), \ r_2 \in D(\gamma_2)$ with $r_2 < r_1$, 
    since $D(\gamma_2) = [0, + \infty)$ and $r_2 < r_1$ implies $0<r_1$   
      and $ 0 \in \gamma(0)$ is assumed, we have      
\begin{equation*}
       \sup \ \{ ~\! g_2 ~\! ; ~\! g_2 \in \gamma_2(r_2) ~\! \}  \leq     
         0 \leq \inf \  \{ ~\!  g_1 ~\! ; ~\! g_1 \in \gamma_1(r_1) ~\!\}.
\end{equation*}
   Hence (ii) of (A3) is satisfied. Consequently, applying Theorem \ref{com}, 
       we find that      
\begin{equation*}
   0 \leq u^\gamma_p(t,x) \le u^N_p(t,x)  
     \quad\quad \forall t \in [0,T) \ \ a.e.\ x  \in \Omega,
\end{equation*}
where \(T = \min \ ( T_m(u^\gamma_p), T_m(u^N_p) ) \),
whence follows 
\begin{equation}\label{Nblow} 
   T_m(u^N_p) \leq T_m(u^\gamma_p) \quad \text{and} \quad 
    \|u^\gamma_p(t)\|_{L^\infty} \le \|u^N_p(t)\|_{L^\infty} \quad
      \forall t \in [0,T_m(u^N_p)).
\end{equation}
  Thus putting arguments above all together, we obtain the following  
   observations.
\begin{proposition}
 Let $u_F^\ast$ be any solution of {\rm (P)}$^\ast_F$ and 
  let $u_p^\ast$ be the unique solution of {\rm (P)}$_p^\ast$
   ($\ast = D, \gamma, N$). Then the following hold.
\begin{itemize}
	\item[(i)] \ $ T_m(u_F^D) \leq T_m(u_p^D), \  
	                  T_m(u_F^\gamma) \leq T_m(u_p^\gamma), \
	                     T_m(u_F^N) \leq T_m(u_p^N)$ .
     \item[(ii)] \  $T_m(u^N_p) \leq T_m(u^\gamma_p) \leq T_m(u^D_p)$.
\end{itemize}	
\end{proposition}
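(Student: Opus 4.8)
The plan is to observe that this proposition is a compact summary of the comparisons already carried out in this subsection, so the proof amounts to invoking Theorem \ref{com} three more times for part (i) (once for each boundary type) and collecting two previously established chains for part (ii).

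For part (i), I would treat the three boundary conditions uniformly. Fix $\ast \in \{D,\gamma,N\}$ and apply Theorem \ref{com} with $m=1$, $a_{ij}=\delta_{ij}$, $\beta_1=\beta_2=0$, $a_1=a_2=u_0$, and the \emph{same} boundary graph $\gamma_1=\gamma_2=\gamma^\ast$ for both solutions, taking $F_1(u)=|u|^{p-2}u$ (so that $u_1=u_p^\ast$ is the sub-solution) and $F_2(u)=F(u)$ (so that $u_2=u_F^\ast$ is the super-solution). Then (A1) and (A2) are trivial, (i) of (A3) holds because the two boundary conditions coincide, and (A4) holds because $F_1$ is of class $C^1$ (hence satisfies (SC), the condition \eqref{cond:F:sc1} being vacuous for $m=1$ by Remark \ref{R22}), while \eqref{cond:F1} yields $|u|^{p-2}u\le\inf\{z:z\in F(u)\}$ for every $u\in\mathbb{R}$, which is precisely (i) of (A4). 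Theorem \ref{com} then gives $0\le u_p^\ast(t,x)\le u_F^\ast(t,x)$ on the common interval of existence $[0,T)$, $T=\min(T_m(u_p^\ast),T_m(u_F^\ast))$, whence $\|u_p^\ast(t)\|_{L^\infty}\le\|u_F^\ast(t)\|_{L^\infty}$. The blow-up alternative of Proposition \ref{LWP} then forces $T_m(u_F^\ast)\le T_m(u_p^\ast)$ by exactly the argument closing the proof of Proposition \ref{prop3.3}: were $u_F^\ast$ to persist past $T_m(u_p^\ast)$, its $L^\infty$-norm would remain bounded up to $T_m(u_p^\ast)$, contradicting $\lim_{t\to T_m(u_p^\ast)}\|u_p^\ast(t)\|_{L^\infty}=+\infty$. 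The case $\ast=N$ is precisely \eqref{est:Tm:uN}, and the cases $\ast=D,\gamma$ are identical with $\gamma^N$ replaced by $\gamma^D$ or $\gamma$.

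For part (ii), I would simply assemble results already in hand: the inequality $T_m(u_p^\gamma)\le T_m(u_p^D)$ is Corollary \ref{cor3.4}, and $T_m(u_p^N)\le T_m(u_p^\gamma)$ is the first assertion of \eqref{Nblow}. Chaining them yields $T_m(u_p^N)\le T_m(u_p^\gamma)\le T_m(u_p^D)$.

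The only genuinely delicate points are hidden in the comparisons feeding part (ii), namely the verification that the sign-completed boundary graphs $\gamma_e$, $\gamma_e^N$ and $\gamma^D$ are monotone and satisfy the appropriate ordering in (A3) — that the solutions are non-negative (so they actually lie in the domains of these extended graphs) and that the domain and inf--sup conditions in (ii) and (iii) of (A3) hold. These verifications, however, have already been discharged in the proof of Proposition \ref{prop3.3} and in the passage establishing \eqref{Nblow}, so for the present statement they need only be cited. Thus the main (and only) obstacle is bookkeeping: making sure each application of Theorem \ref{com} is set up with the correct sub-/super-solution roles and the correct boundary graph, after which the blow-up conclusions follow mechanically from Proposition \ref{LWP}.
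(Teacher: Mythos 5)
Your proposal is correct and matches the paper's intent exactly: the proposition is stated as a summary (``putting arguments above all together''), with part (ii) obtained by chaining Corollary \ref{cor3.4} with \eqref{Nblow}, the case $\ast=N$ of part (i) being \eqref{est:Tm:uN}, and the cases $\ast=D,\gamma$ following from the same application of Theorem \ref{com} with identical boundary graphs $\gamma_1=\gamma_2=\gamma^\ast$ and $F_1(u)=|u|^{p-2}u\le F_2(u)=F(u)$, combined with the blow-up alternative of Proposition \ref{LWP}. Your verifications of (A1)--(A4) and of the ordering $|u|^{p-2}u\le |u|^{p-2}u^+\le\inf F(u)$ from \eqref{cond:F1} are exactly the ones the paper relies on.
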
	
	   
}

\subsection{Reaction diffusion system arising from nuclear reactor}
{
  In this subsection, we exemplify the applicability of Theorem \ref{com} 
    for systems of parabolic equations.  
     We consider the following reaction diffusion system, which consists of two   
      equations possessing a nonlinear coupling term between two 
       real-valued unknown functions.
  }
\begin{equation*}
\label{NR}
  {\rm (NR)} \ 
    \left\{
     \begin{aligned}
      & \ \partial_t u_1-\Delta u_1 = u_1u_2 - bu_1, 
        && t>0,~x\in\Omega,
\\
      & \ \partial_t u_2-\Delta u_2 = au_1,  
        && t>0,~x\in\Omega,
\\
      & 
      \ \partial_{\nu}u_1 + \alpha_1 |u_1|^{\gamma_1 -2} u_1 
         = \partial_{\nu}u_2 + \alpha_2 |u_2|^{\gamma_2-2} u_2 = 0,
         && t>0,~x\in\partial\Omega,
\\
      & \ u_1(0,x)=u_{10}(x)\ge 0,~u_2(0,x)=u_{20}(x)\ge 0, 
         &&x\in\Omega.
\end{aligned}
\right.
\end{equation*}
Here \(\Omega\subset\mathbb{R}^N\) is a bounded domain with smooth boundary \(\partial\Omega\).
Moreover \(u_1\), \(u_2\) are real-valued unknown functions, \(a\) and \(b\) are given positive constants. 
  As for the parameters appearing in the boundary condition, we assume
{
   \(\alpha_i \in [0,\infty), \ \gamma_i \in (1,\infty) \ (i=1,2)\).  
     We note that the boundary condition for \(u_i\) becomes the homogeneous Neumann 
      boundary condition when \(\alpha_i = 0\), and the Robin boundary condition when 
        \( \alpha_i >0\) and \(\gamma_i = 2\).
 } 
  We further assume that the given initial data \(u_{10}\), \(u_{20}\) 
	are nonnegative and belong to \( L^{\infty}(\Omega)\).

  The equations of this system with linear boundary conditions was proposed
    in \cite{KC1} to describe the diffusion phenomenon of neutron and heat 
     in nuclear reactors, where  \(u_1\) and \(u_2\) represent the neutron density   
      and the temperature, respectively. 
{
   However we here consider this system with nonlinear boundary conditions of
    power type as above, since from a physical point of view, it seems to be 
      more natural to consider the nonlinear boundary condition rather than 
       the linear ones. 
   In fact, the linear boundary conditions such as Dirichlet or Neumann type can be   
     realized only when some artificial controls of the flux are given on the boundary. 
      For a large scale system such as nuclear reactors, however, it is extremely 
        difficult to give such a control,  
          so actually in reactors no control is given for the flux on the boundary. 
   
   When there is no artificial control of the flux on the boundary, 
     there exists a well-know radiation model in physics, called      
       the Stefan-Boltzmann law, which says that the total radiant heat power 
         emitted from the boundary is proportional to the fourth power of the 
          temperature, which is far from linear.
     
  The existence and uniqueness of non-negative local solutions of (NR) belonging to 
   $L^\infty(\Omega)$ is shown in \cite{KO} for the case where $\gamma_1=2$, 
    where it is also proved that (NR) possesses a positive stationary solution 
     $ \bar{U} = (\bar{u}_1, \bar{u}_2)$ which works as the threshold to separate  
       global existence and finite time blow up for the case where 
        \(\gamma_1=\gamma_2=2\), i.e., 
         roughly speaking, if the initial data stay below $\bar{U}$, 
          then the corresponding solution exists globally, and if the initial 
            data is larger than $\bar{U}$, then the corresponding solution 
              blows up in finite time.  
   As for the case where \(\gamma_i \neq 2\), however, this method for showing the 
     existence of blow-up solutions does not work well.  
   
   Nevertheless it is possible to show that (NR) with \(\gamma_i \neq 2\) 
     admits blow-up solutions by applying the same strategy as that 
       in the previous subsection.
           Along the same lines as before, we first consider the following 
            Dirichlet problem for (NR).
\begin{equation*}
{\rm (NR)}^D \
\left\{
\begin{aligned}
      & \ \partial_t u_1-\Delta u_1 = u_1 u_2 - b u_1, 
         && t>0,~ x \in \Omega,
\\
      & \ \partial_t u_2-\Delta u_2 = a u_1,  
         && t>0,~x\in\Omega,
\\
      & \ u_1 = u_2 = 0, 
         && t>0,~x \in \partial\Omega,
\\
      & \ u_1(0,x) = u_{10}(x) \ge 0,~u_2(0,x) = u_{20}(x) \ge 0, 
         && \phantom{t>0,~}  x \in \Omega.
\end{aligned}
\right.
\end{equation*}
   We first note that for every \( U_0 := (u_{10}, u_{20} ) \in  
    \mathbb{L}^\infty_+ (\Omega) := \{ ~\! (u_1,u_2) ~\! ; ~\! u_i \geq 0, 
       u_i \in L^\infty(\Omega)  \ (i=1,2) ~\! \}\), 
         (NR) or (NR)$^D$ possess a unique solution \( U(t) := (u_1(t),u_2(t)) \in 
          \mathbb{L}^\infty_+ (\Omega) \) 
           satisfying the blow-up alternative with respect to \(L^\infty\)-norm 
            such as in Proposition \ref{LWP}. 
     We are going to show this result for a more general equation: 
 \begin{equation*}
 {\rm (NR)}^\gamma \
 \left\{
 \begin{aligned}
 & \ \partial_t u_1-\Delta u_1 = u_1 u_2 - b u_1, 
 && t>0,~ x \in \Omega,
 \\
 & \ \partial_t u_2-\Delta u_2 = a u_1,  
 && t>0,~x\in\Omega,
 \\
 & \   \partial_{\nu}u_1 + \gamma_1(u_1)  
          = \partial_{\nu}u_2 + \gamma_2 (u_2) = 0,
 && t>0,~x \in \partial\Omega,
 \\
 & \ u_1(0,x) = u_{10}(x) \ge 0,~u_2(0,x) = u_{20}(x) \ge 0, 
 && \phantom{t>0,~}  x \in \Omega,
 \end{aligned}
 \right.
 \end{equation*}    
  where $\gamma_i : \mathbb{R}^1 \to 2^{\mathbb{R}^1}$ are 
   maximal monotone operators (\(i=1,2\)).  
    To do this, we can repeat much the same arguments as those in the proof 
     of Proposition \ref{LWP}.
     
   Let $H := L^2(\Omega) \times L^2(\Omega)$ with inner product 
     $(U,V)_H := (u_1,v_1)_{L^2} + (u_2, v_2)_{L^2}$ for $U=(u_1,u_2),\  V=(v_1,v_2)$, 
       and put $|\nabla U|^2 = |\nabla u_1|^2 + |\nabla u_2|^2$.
   Let $j_i : \mathbb{R}^1 \to (- \infty, + \infty]$ be lower semi-continuous convex  
     functions such that $ \partial j_i = \gamma_i$ \((i=1,2)\). 
      For the Dirichlet (resp. Neumann) boundary condition, we put 
       $j_i(0)=0$ and $j_i(r)= + \infty$ for $r \neq 0$ ( resp. $j_i(r)=0, \ \forall r \in \mathbb{R}^1$ ). 
       
       Then we define     
\begin{equation*}
 \varphi(U) = 
              \begin{cases} 
                 \ \displaystyle\frac12 \int_\Omega ( |\nabla U(x)|^2 + |U(x)|^2) dx 
                    + \sum_{i=1}^{2} \int_{\partial \Omega} j_i(u_i(x)) d\sigma 
         & U \in D(\varphi), 
        \\[2mm] 
                \ + \infty & U \in H \backslash D(\varphi), 
              \end{cases}
\end{equation*}
  where $ D(\varphi):= \{ U ; u_i \in H^1(\Omega) \ j_i(u_i) \in L^1(\Omega) \ (i=1,2) \}$. 
    For the homogeneous Dirichlet (resp. Neumann) boundary condition case, we take 
     $D(\varphi)= H^1_0(\Omega) \times  H^1_0(\Omega)$ \  
      (resp. $H^1(\Omega) \times H^1(\Omega)$). 
   Then we have 
    \begin{equation*}
     \begin{cases}
      \  \partial\varphi(U) = (-\Delta u_1 + u_1, -\Delta u_2 + u_2 ),   
\\[2mm]
\  D(\partial\varphi) = \{ U=(u_1,u_2) ~\!;~\! u_i \in H^2(\Omega) \  
                                      - \partial_\nu u_i(x) \in \gamma_i(u_i(x)) 
                                          \ (i=1,2)   
                                           \quad\mbox{a.e. on }\partial\Omega\}.
     \end{cases}
\end{equation*}
   Furthermore the elliptic estimate \eqref{elliptic} with $u$ replaced by $u_i$ 
    (\(i=1,2\)) holds true for all $U \in D(\partial \varphi)$.

   Then by putting $ B(U) := ( -u_1 ~\! u_2 + (b-1) ~\! u_1, - u_2 - a ~\! u_1)$, 
    (NR)$^\gamma$ can be reduced to 
      the following abstract evolution equation in $H$.
\begin{equation*}
{\rm (CP)}^\gamma \ 
\begin{cases}
\ \displaystyle\frac{d}{dt} U(t) + \partial \varphi(U(t)) + B(U(t)) \ni 0,  
\quad t>0, 
\\[2mm]
\ U(0) = U_0 =(u_{10}, u_{20}).  
\end{cases}	      
\end{equation*}
  In order to apply ``$L^\infty$-Energy Method'', we again introduce the following 
    cut-off functions $I_{K_{i,M}}(\cdot)$ (\(i=1,2\)):
\begin{equation*}
    I_{K_{i,M}}(U) := 
        \begin{cases}
         \ 0  & U \in K_{i,M} := \{ ~\! U=(u_1,u_2) \in H ~\! ; ~\! |u_i(x)| \leq M 
          \quad a.e. \ x \in \Omega ~\! \}, 
\\[2mm]
           \ + \infty & U \in H \setminus K_{i,M},
        \end{cases} 
\end{equation*}      
   and put 
\begin{equation*}
   \varphi_M(U) := \varphi(U) + I_{K_{1,M}}(U) + I_{K_{2,M}}(U).       
\end{equation*}
  Then we get 
\begin{equation*}
     \partial \varphi(U) = \partial \varphi(U) + \partial I_{1,M}(U) + \partial I_{2,M}(U) 
        \quad \forall U \in D(\partial \varphi) \cap K_{1,M} \cap K_{2,M}.
\end{equation*}  
  Consider the following auxiliary equation:
\begin{equation*}
   {\rm (CP)}_M^\gamma \ 
     \begin{cases}
      \ \displaystyle\frac{d}{dt} U(t) + \partial \varphi_M(U(t)) + B(U(t)) \ni 0,  
       \quad t>0, 
 \\[2mm]
        \ U(0) = U_0,  
 \end{cases}
 \end{equation*} 
 where we choose $M>0$ such that 
 \begin{equation*}
     M = \|U_0\|_{L^\infty} + 2 
          := \|u_{10}\|_{L^\infty} + \|u_{20}\|_{L^\infty} + 2.
 \end{equation*}
  Then as in the proof of Proposition \ref{LWP}, we can easily show that 
   (CP)$_M^\gamma$, which is equivalent to the following (NR)$_M^\gamma$, 
     admits a unique global solution $U(t)=( u_1(t), u_2(t) )$.
\begin{equation*}
{\rm (NR)}^\gamma_M \
\left\{
\begin{aligned}
& \ \partial_t u_1 - \Delta u_1  + \beta_M(u_1) = u_1 u_2 - b u_1, 
&& t>0,~ x \in \Omega,
\\
& \ \partial_t u_2-\Delta u_2 + \beta_M(u_2) = a u_1,  
&& t>0,~x\in\Omega,
\\
& \  \partial_{\nu}u_1 + \gamma_1(u_1)  
= \partial_{\nu}u_2 + \gamma_2 (u_2) = 0,
&& t>0,~x \in \partial\Omega,
\\
& \ u_1(0,x) = u_{10}(x) \ge 0,~u_2(0,x) = u_{20}(x) \ge 0,
&& \phantom{t>0,~}  x \in \Omega.
\end{aligned}
\right.
\end{equation*}               
        
    Then in parallel with \eqref{est:u:Linfty}, multiplying the first 
      and second equations of (NR)$_M^\gamma$ by $|u_1|^{r-2}u_1$ and $|u_2|^{r-2}u_2$, 
        we can obtain 
\begin{equation*}
\|U(t)\|_{L^\infty} \leq \|U_0\|_{L^\infty} + \int_0^t \ell(\|U(s)\|_{L^\infty}) ds 
     \quad \text{with} \ \ell(r) = a r + r^2, 
\end{equation*}        
   where $\|U\|_{L^\infty} = \|(u_1,u_2)\|_{L^\infty} := \|u_1\|_{L^\infty} + \|u_2\|_{L^\infty}$. 
       Then we can repeat the same arguments as those in the proof of 
         Proposition \ref{LWP}.  
          Furthermore multiplying the first and second equations of (NR)$^D$ by 
            $u_1^- := \max (-u_1,0)$ and $u_2^- := \max (-u_2,0)$, we can easily deduce 
\begin{align*}
   \frac12 \frac{d}{dt} ( \|u_1^-(t)\|_{L^2}^2 + \|u_2^-(t)\|_{L^2}^2 ) 
      & \leq \|u_2\|_{L^\infty} \|u_1^-(t)\|_{L^2}^2
                 + a ~\! \|u_1^-(t)\|_{L^2} \|u_2^-(t)\|_{L^2} 
\\[2mm]
      & \leq ( \|u_2\|_{L^\infty} + a ) ~\! ( \|u_1^-(t)\|_{L^2}^2
                  + \|u_2^-(t)\|_{L^2}^2 ).
\end{align*}  
  Then by Gronwall's inequality, we get $u_1^-(t) = u_2^-(t)=0$ for all $t$, 
                i.e., $(u_1,u_2)$ is a non-negative solution (see \cite{KO}). 
 ( The non-negativity of solutions can be also derived from application of  
   Theorem \ref{com} for (NR)$^\gamma$ with the coupling term $u_1 ~\! u_2 $ 
     replaced by $ u_1^+ ~\! u_2$. ) 
                
    Here we prepare the following lemma concerning the existence of blow-up 
     solutions of (NR)$^D$.
\begin{proposition}\it\label{prop3.6}
	Assume that \( (u_{10}, u_{20})\) belongs to \( \mathbb{L}^\infty_+ (\Omega) \) 
	  and satisfies  
\begin{equation}\label{cond:initial:bu}
  \int_\Omega ( a ~\!u_{10}(x) + b~\! u_{20}(x) - \frac12 u_{20}^2(x) ) ~\! \phi_1(x)~\! dx \geq 0, 
    \quad \int_\Omega u_{20}(x) ~\! \phi_1(x) ~\!dx > 2 (b + \lambda_1).
\end{equation}
	Then the solution $U(t)=(u_1(t), u_2(t))$ of {\rm (NR)}$^D$ blows up in finite time. 
	 Here $\lambda_1$ and $\phi_1$ are the first eigenvalue and its associate 
	   normalized positive eigenfunction of \eqref{eigen}. 
\end{proposition}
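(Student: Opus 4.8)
The plan is to run Kaplan's method, but on a carefully chosen scalar quantity that decouples the indefinite interaction term. Since the solution of {\rm (NR)}$^D$ is non-negative and satisfies the blow-up alternative in $L^\infty$ (the analogue of Proposition \ref{LWP} established above for {\rm (NR)}$^\gamma$, hence for {\rm (NR)}$^D$), it suffices to exhibit a weighted moment of $U$ that becomes infinite in finite time. Accordingly, I would introduce
\begin{equation*}
  p(t) := \int_\Omega u_1(t,x)\,\phi_1(x)\,dx, \qquad
  q(t) := \int_\Omega u_2(t,x)\,\phi_1(x)\,dx, \qquad
  R(t) := \frac12 \int_\Omega u_2^2(t,x)\,\phi_1(x)\,dx.
\end{equation*}
Testing the first equation of {\rm (NR)}$^D$ against $\phi_1$, the second against $\phi_1$ and once more against $u_2\phi_1$, and using Green's formula together with $-\Delta\phi_1 = \lambda_1\phi_1$ and the vanishing of $u_1,u_2,\phi_1$ on $\partial\Omega$, I expect to obtain
\begin{align*}
  p' &= \int_\Omega u_1 u_2\,\phi_1\,dx - (b+\lambda_1)\,p, \\
  q' &= a\,p - \lambda_1\,q, \\
  R' &= -\int_\Omega \phi_1\,|\nabla u_2|^2\,dx - \lambda_1\,R + a\int_\Omega u_1 u_2\,\phi_1\,dx.
\end{align*}

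The crucial step, and the one I expect to be the real content of the argument, is the choice of the combination that eliminates the indefinite coupling term $\int_\Omega u_1 u_2\,\phi_1\,dx$. Setting
\begin{equation*}
  G(t) := a\,p(t) + b\,q(t) - R(t)
        = \int_\Omega \Bigl( a\,u_1 + b\,u_2 - \tfrac12 u_2^2 \Bigr)\phi_1\,dx,
\end{equation*}
a direct computation from the three identities above shows that the $a\int_\Omega u_1 u_2\,\phi_1\,dx$ contributions cancel, leaving
\begin{equation*}
  G' = -\lambda_1\,G + \int_\Omega \phi_1\,|\nabla u_2|^2\,dx \ge -\lambda_1\,G.
\end{equation*}
Observe that $G(0)\ge 0$ is exactly the first hypothesis in \eqref{cond:initial:bu}. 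Integrating (equivalently, noting that $\frac{d}{dt}(e^{\lambda_1 t}G)\ge 0$) then yields $G(t)\ge 0$ throughout the existence interval.

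With $G\ge 0$ in hand I would close the argument as in the scalar case. Since $\phi_1\,dx$ is a probability measure ($\phi_1\ge 0$, $\int_\Omega\phi_1\,dx=1$), Jensen's (equivalently Cauchy--Schwarz's) inequality gives $q^2 \le \int_\Omega u_2^2\,\phi_1\,dx = 2R$, so $G\ge 0$ forces $a\,p \ge R - b\,q \ge \tfrac12 q^2 - b\,q$. Feeding this into the equation for $q$ produces the autonomous Riccati-type inequality
\begin{equation*}
  q' = a\,p - \lambda_1\,q \ge \tfrac12 q^2 - (b+\lambda_1)\,q
        = \tfrac12\,q\,\bigl( q - 2(b+\lambda_1) \bigr).
\end{equation*}
The second hypothesis in \eqref{cond:initial:bu}, namely $q(0) > 2(b+\lambda_1)$, makes the right-hand side strictly positive at $t=0$; hence $q$ is increasing and stays above the threshold $2(b+\lambda_1)$, and a standard comparison with the solution of $\bar q' = \tfrac12\,\bar q\,(\bar q - 2(b+\lambda_1))$ shows that $q(t)\to+\infty$ at some finite time $T^\ast$. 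Finally, $q(t) = \int_\Omega u_2\,\phi_1\,dx \le \|u_2(t)\|_{L^\infty}$ implies $\|u_2(t)\|_{L^\infty}\to+\infty$, so the blow-up alternative forces $T_m<\infty$.

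The only genuine obstacle is discovering the functional $G$; once the coupling term is seen to cancel, the remainder is Kaplan's method essentially verbatim. A minor technical point to be checked is that the regularity in the definition of a solution (in particular $u_i\in L^2_{loc}((0,T];H^2)\cap W^{1,2}_{loc}((0,T];L^2)$) legitimizes the integrations by parts and the differentiation of $p,q,R$ for $t>0$, with the initial values recovered by continuity as $t\to 0^+$.
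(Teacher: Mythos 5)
Your proposal is correct and follows essentially the same route as the paper: your functional $G = ap + bq - R$ is exactly the paper's auxiliary quantity $z(t) = y'(t) + (b+\lambda_1)y(t) - \tfrac12\int_\Omega u_2^2\phi_1\,dx$, and the subsequent steps (the differential inequality $G' \ge -\lambda_1 G$, the Jensen/Cauchy--Schwarz bound $q^2 \le 2R$, and the Riccati inequality for $q$) coincide with the paper's argument. The only difference is presentational: you compute $G'$ directly from the three first-order identities, whereas the paper reaches the same inequality by differentiating $y$ twice and substituting $au_1 = \partial_t u_2 - \Delta u_2$.
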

\begin{proof} 
	Suppose that $U(t)$ is a global solution. 
	 Then multiplying the first and second equations of (NR)$^D$ by \(\phi_1\), 
	   we obtain
	\begin{align}
 	  & \frac{d}{dt}\left( \int_{\Omega} u_1\varphi_1dx \right) 
 	     + (b+\lambda_1)\left( \int_{\Omega} u_1 \phi_1dx \right) 
 	       = \int_{\Omega} u_1 u_2 \phi_1 dx, 
 	           \label{eq:u1phi} 
\\[2mm]
	  & \frac{d}{dt}\left( \int_{\Omega} u_2 \phi_1 dx \right) 
	     + \lambda_1\int_{\Omega}u_2 \phi_1 dx 
	       = a \int_{\Omega} u_1 \phi_1 dx.
              \label{eq:u2phi}
	\end{align}
	Following \cite{Q1}, we set
	\begin{equation*}
	   y(t) := \int_{\Omega} u_2(t) \phi_1dx, \quad 
	     z(t) := y'(t) + (b+\lambda_1) y(t) -\frac{1}{2}\int_{\Omega} u_2^2(t) \phi_1 dx.
	\end{equation*}
  Then by \eqref{eq:u2phi} and \eqref{eq:u1phi}, we get 
\begin{align}
    y''(t) & = - \lambda_1 y'(t) + a \int_\Omega u_1'(t) \phi_1 dx  \notag
 \\
           & = - \lambda_1 y'(t) 
                 - (b + \lambda_1) \int_\Omega a u_1 \phi_1 dx 
                   + \int_\Omega a u_1 u_2 \phi_1 dx.   \label{eq:y2}
\end{align}
  We substitute $a u_1 = \partial_t u_2 - \Delta u_2$ in \eqref{eq:y2}, 
   then by integration by parts we have   
	\begin{equation*}
	  y''(t) + ( b +
	   2 \lambda_1 )y'(t) + \lambda_1(b+\lambda_1) y(t) 
	     =  \frac{1}{2}\frac{d}{dt}\left( \int_{\Omega} u_2^2 \phi_1dx \right) 
	          + \int_{\Omega} |\nabla u_2|^2 \phi_1dx  
	             +\frac{\lambda_1}{2}\!\!\int_{\Omega} u_2^2 \phi_1dx,
	\end{equation*}
  whence follows 
	\begin{equation*}
	z'(t) \ge -\lambda_1 z(t).
	\end{equation*}	
 Therefore we get \(z(t)\ge z(s)e^{-\lambda_1 (t-s)} \) for $0<s<t$.  
   Here \eqref{eq:u2phi} and \eqref{cond:initial:bu} yield  
	\begin{align*}
	z(s)  & =  y'(s) + (b+\lambda_1) ~\! y(s) - \frac12 \int_\Omega u_2^2(s) \phi_1 dx
\\
           & = \int_\Omega ( a ~\!u_1(s) + b ~\! u_2(s) - \frac12 u_2^2(s)  ) ~\! \phi_1 dx
\\  
           & \to \int_\Omega ( a ~\! u_{10} + b ~\! u_{20} - \frac12 u_{20}^2  ) 
              ~\! \phi_1 dx \geq 0  
              \quad \text{as} \ s \to 0, 
	\end{align*}
   since $u_1(t), u_2(t) \in C([0,1];L^2(\Omega)) \cap L^\infty(0,1;L^\infty(\Omega))$. 
    Hence we see that $z(t) \geq 0$ for all $t>0$, i.e., we have 
\begin{align}
   y'(t) & \geq - (b + \lambda_1 ) ~\! y(t) + \frac12 \int_\Omega u_2^2(t) \phi_1 dx
\notag
\\
         & \geq  - (b + \lambda_1 ) ~\! y(t) + \frac12 ~\! y^2(t)
\notag
\\ 
         & \geq \frac12 ~\! y(t) ( y(t) - 2 (b + \lambda_1) ).
 \label{est:below:yprime}
\end{align}
  Then \eqref{est:below:yprime} assures that $y(t)$ blows up in finite time 
   if $y(0) > 2 ( b + \lambda_1)$.
\end{proof}
  In order to make it clear that solutions of parabolic systems 
   differ according to their boundary conditions imposed, we here denote the unique 
     solutions of (NR) and (NR)$^D$ by $U^\gamma(t) = (u^\gamma_1(t), u^\gamma_2(t))$ 
       and $U^D(t) = (u^D_1(t), u^D_2(t))$ with the same initial data 
        $U_0 \in \mathbb{L}^\infty_+ (\Omega)$, respectively.   

  We are going to compare $U^\gamma(t)$ with $U^D(t)$ by applying Theorem \ref{com}. 
    for $U_1=U^D, \ U_2=U^\gamma$. Let       
\begin{align*}
   & m = 2 ; \  a_{i,j}^1 = a_{i,j}^2 = \delta_{i,j} ;   
       \quad a_1^1=a_2^1=u_{10}, \ a_1^2=a_2^2=u_{20} ;
         \quad \beta_1^1=\beta_2^1=\beta_1^2=\beta_2^2=0 ; 
\\[2mm] 
   & F_1^1(U) = F_2^1(U)= F^1(U) := u_1 u_2 - b u_1, 
     \ F_2^1(U) = F_2^2(U)= F^2(U) := a u_1 ;
\\[2mm] 
   & \gamma_1^1(r) = \gamma_1^2(r)= \gamma^D(r), \quad    
   \gamma_2^i(r) = 
      \begin{cases} 
        \ \alpha_i ~\! |r|^{\gamma_i -2} r  \quad & \ \mbox{for} \ r > 0,
\\[1mm] 
       \ (- \infty,0] \quad & \ \mbox{for} \ r = 0, \phantom{spac}
\\[1mm]
\ \emptyset \quad & \mbox{ for} \ r < 0,          
      \end{cases}
      ( i=1,2),
\end{align*}      
  where $\gamma^D$ is the maximal monotone graph defined by \eqref{BCD}.
   Then (A1), (A2) and (i) of (A4) are obviously satisfied. 
     Moreover as in the proof of Proposition \ref{prop3.3}, 
        we can see that $u_1^D$ and $u_2^D$ obey the homogeneous Dirichlet 
          boundary condition, and that 
           $ - \partial_\nu u_1^\gamma \in \gamma_2^1(u_1^\gamma)$ and 
             $ - \partial_\nu u_2^\gamma \in \gamma_2^2(u_2^\gamma)$ hold, 
              since $u_1^\gamma$ and $u_2^\gamma$ are non-negative solutions. 
     Therefore $ D(\beta_1^1)= D(\beta_1^2)=D(\beta^D) = \{0\}$ and 
       $D(\gamma_2^1)=D(\gamma_2^2) = [0,\infty)$ assure (iii) of (A3). 
       
    Hence to apply Theorem \ref{com}, it suffices to check (ii) of (A4), 
     i.e., $ F^1(U) = u_1 u_2 - b u_1, \ F^2(U)= a u_1$ satisfies (SC). 
       Since $F^1, F^2 \in C^1(\mathbb{R}^2)$, \eqref{cond:F2} is obvious. 
        As for \eqref{cond:F1}, we get 
\begin{equation*}
     \frac{\partial}{\partial u_1} F^2(U) = a >0, 
      \quad 
       \frac{\partial}{\partial u_2} F^1(U) = u_1 \geq 0.
\end{equation*}    
   Consequently, applying Theorem \ref{com}, we conclude 
\begin{align*}
   & T_m(U^\gamma) \leq T_m(U^D) \quad \text{and} 
\\[1mm] 
   & 0 \leq u_1^D(t,x) \leq u_1^\gamma(t,x), \quad 
       0 \leq u_2^D(t,x) \leq u_2^\gamma(t,x) \quad \forall t \in [0,T_m(U^\gamma)) 
          \quad a.e. \ x \in \Omega.
\end{align*}
	Thus by virtue of Proposition \ref{prop3.6}, we have the following corollary.
\begin{corollary}\label{cor3.7}
   Assume that \( (u_{10}, u_{20})\) belongs to \( \mathbb{L}^\infty_+ (\Omega) \) 
    and satisfies \eqref{cond:initial:bu}.   
     Then the unique solution $U(t)=(u_1(t), u_2(t))$ of {\rm (NR)}
       blows up in finite time. 	
\end{corollary}
\begin{remark}
   The existence of $(u_{10}, u_{20})$ satisfying \eqref{cond:initial:bu} 
     is assured when $ a>0$. For instance, if $u_{10} \geq \frac{1}{2 a} u_{20}^2$ 
       and $u_{20}$ is sufficiently large, then \eqref{cond:initial:bu} is satisfied.
     
     For the case where $a=0$, however, there is no initial 
       data $(u_{10}, u_{20})$ satisfying \eqref{cond:initial:bu}. 
   In fact, $a=0$ implies that $ \sup_{t \geq 0} \|u_2(t)\|_{L^\infty} \leq \|u_{20}\|_{L^\infty}$, 
     then $u_1(t)$ satisfies $\partial_t u_1 - \Delta u_1(t) 
      \leq \|u_{20}\|_{L^\infty} u_1(t) $, whence follows 
        $\|u_1(t)\|_{L^\infty} \leq \|u_{10}\|_{L^\infty} ~\! e^{\|u_{20}\|_{L^\infty} t}$. 
          Consequently every local solution can be continued globally. 
\end{remark}
\begin{remark}
  The assertion of Corollary \ref{cor3.7} holds true for more general equation 
   (NR)$^\gamma$, provided that $ 0 \in \gamma_i(0) \ (i=1,2)$ is satisfied.
\end{remark}
}


\end{document}